\newtheorem{theorem}{Theorem}
\newtheorem{property}{Property}
\newtheorem{example}{Example}
\newtheorem{corollary}{Corollary}
\newtheorem{lemma}{Lemma}
\newtheorem{definition}{Definition}
\definecolor{darkgreen}{rgb}{0, 0.5, 0}
\title{Learning to Pivot as a Smart Expert
}
\author{
  Tianhao Liu \\
  Research Institute for Interdisciplinary Sciences \\
  Shanghai University of Finance and Economics \\
  \texttt{liu.tianhao@163.sufe.edu.cn} \\
   \And
  Shanwen Pu \\
  Research Institute for Interdisciplinary Sciences \\
  Shanghai University of Finance and Economics \\
  \texttt{2019212802@live.sufe.edu.cn} \\
   \And
  Dongdong Ge \\
  Research Institute for Interdisciplinary Sciences \\
  Shanghai University of Finance and Economics \\
  \texttt{ge.dongdong@mail.shufe.edu.cn} \\
   \And
  Yinyu Ye \\
  Stanford University \\
  \texttt{yyye@stanford.edu} \\
}
\begin{document}
\global\long\def\inprod#1#2{\left\langle #1,#2\right\rangle }%
\global\long\def\inner#1#2{\langle#1,#2\rangle}%
\global\long\def\binner#1#2{\big\langle#1,#2\big\rangle}%
\global\long\def\norm#1{\Vert#1\Vert}%
\global\long\def\bnorm#1{\big\Vert#1\big\Vert}%
\global\long\def\Bnorm#1{\Big\Vert#1\Big\Vert}%
\global\long\def\red#1{\textcolor{red}{#1}}%
\global\long\def\blue#1{\textcolor{blue}{#1}}%

\global\long\def\brbra#1{\big(#1\big)}%
\global\long\def\Brbra#1{\Big(#1\Big)}%
\global\long\def\rbra#1{(#1)}%

\global\long\def\sbra#1{[#1]}%
\global\long\def\bsbra#1{\big[#1\big]}%
\global\long\def\Bsbra#1{\Big[#1\Big]}%
\global\long\def\abs#1{\vert#1\vert}%
\global\long\def\babs#1{\big\vert#1\big\vert}%

\global\long\def\cbra#1{\{#1\}}%
\global\long\def\bcbra#1{\big\{#1\big\}}%
\global\long\def\Bcbra#1{\Big\{#1\Big\}}%
\global\long\def\vertiii#1{\left\vert \kern-0.25ex  \left\vert \kern-0.25ex  \left\vert #1\right\vert \kern-0.25ex  \right\vert \kern-0.25ex  \right\vert }%
\global\long\def\matr#1{\bm{#1}}%
\global\long\def\til#1{\tilde{#1}}%
\global\long\def\wtil#1{\widetilde{#1}}%
\global\long\def\wh#1{\widehat{#1}}%
\global\long\def\mcal#1{\mathcal{#1}}%
\global\long\def\mbb#1{\mathbb{#1}}%
\global\long\def\mtt#1{\mathtt{#1}}%
\global\long\def\ttt#1{\texttt{#1}}%
\global\long\def\dtxt{\textrm{d}}%
\global\long\def\bignorm#1{\bigl\Vert#1\bigr\Vert}%
\global\long\def\Bignorm#1{\Bigl\Vert#1\Bigr\Vert}%
\global\long\def\rmn#1#2{\mathbb{R}^{#1\times#2}}%
\global\long\def\deri#1#2{\frac{d#1}{d#2}}%
\global\long\def\pderi#1#2{\frac{\partial#1}{\partial#2}}%
\global\long\def\limk{\lim_{k\rightarrow\infty}}%
\global\long\def\trans{\textrm{T}}%
\global\long\def\onebf{\mathbf{1}}%
\global\long\def\Bbb{\mathbb{B}}%
\global\long\def\hbf{\mathbf{h}}%
\global\long\def\zerobf{\mathbf{0}}%
\global\long\def\zero{\bm{0}}%

\global\long\def\Euc{\mathrm{E}}%
\global\long\def\Expe{\mathbb{E}}%
\global\long\def\rank{\mathrm{rank}}%
\global\long\def\range{\mathrm{range}}%
\global\long\def\diam{\mathrm{diam}}%
\global\long\def\epi{\mathrm{epi} }%
\global\long\def\inte{\operatornamewithlimits{int}}%
\global\long\def\dist{\operatornamewithlimits{dist}}%
\global\long\def\proj{\operatorname{Proj}}%
\global\long\def\cov{\mathrm{Cov}}%
\global\long\def\argmin{\operatornamewithlimits{argmin}}%
\global\long\def\argmax{\operatornamewithlimits{argmax}}%
\global\long\def\tr{\operatornamewithlimits{tr}}%
\global\long\def\dis{\operatornamewithlimits{dist}}%
\global\long\def\sign{\operatornamewithlimits{sign}}%
\global\long\def\prob{\mathrm{Prob}}%
\global\long\def\st{\operatornamewithlimits{s.t.}}%
\global\long\def\dom{\mathrm{dom}}%
\global\long\def\prox{\mathrm{prox}}%
\global\long\def\diag{\mathrm{diag}}%
\global\long\def\and{\mathrm{and}}%
\global\long\def\aleq{\overset{(a)}{\leq}}%
\global\long\def\aeq{\overset{(a)}{=}}%
\global\long\def\ageq{\overset{(a)}{\geq}}%
\global\long\def\bleq{\overset{(b)}{\leq}}%
\global\long\def\beq{\overset{(b)}{=}}%
\global\long\def\bgeq{\overset{(b)}{\geq}}%
\global\long\def\cleq{\overset{(c)}{\leq}}%
\global\long\def\ceq{\overset{(c)}{=}}%
\global\long\def\cgeq{\overset{(c)}{\geq}}%
\global\long\def\dleq{\overset{(d)}{\leq}}%
\global\long\def\deq{\overset{(d)}{=}}%
\global\long\def\dgeq{\overset{(d)}{\geq}}%
\global\long\def\eleq{\overset{(e)}{\leq}}%
\global\long\def\eeq{\overset{(e)}{=}}%
\global\long\def\egeq{\overset{(e)}{\geq}}%
\global\long\def\fleq{\overset{(f)}{\leq}}%
\global\long\def\feq{\overset{(f)}{=}}%
\global\long\def\fgeq{\overset{(f)}{\geq}}%
\global\long\def\gleq{\overset{(g)}{\leq}}%
\global\long\def\as{\textup{a.s.}}%
\global\long\def\ae{\textup{a.e.}}%
\global\long\def\Var{\operatornamewithlimits{Var}}%
\global\long\def\clip{\operatorname{clip}}%
\global\long\def\conv{\operatorname{conv}}%
\global\long\def\Cov{\operatornamewithlimits{Cov}}%
\global\long\def\raw{\rightarrow}%
\global\long\def\law{\leftarrow}%
\global\long\def\Raw{\Rightarrow}%
\global\long\def\Law{\Leftarrow}%
\global\long\def\vep{\varepsilon}%
\global\long\def\dom{\operatornamewithlimits{dom}}%
\global\long\def\tsum{{\textstyle {\sum}}}%
\global\long\def\Cbb{\mathbb{C}}%
\global\long\def\Ebb{\mathbb{E}}%
\global\long\def\Fbb{\mathbb{F}}%
\global\long\def\Nbb{\mathbb{N}}%
\global\long\def\Rbb{\mathbb{R}}%
\global\long\def\extR{\widebar{\mathbb{R}}}%
\global\long\def\Pbb{\mathbb{P}}%
\global\long\def\Zbb{\mathbb{Z}}%
\global\long\def\Mrm{\mathrm{M}}%
\global\long\def\Acal{\mathcal{A}}%
\global\long\def\Bcal{\mathcal{B}}%
\global\long\def\Ccal{\mathcal{C}}%
\global\long\def\Dcal{\mathcal{D}}%
\global\long\def\Ecal{\mathcal{E}}%
\global\long\def\Fcal{\mathcal{F}}%
\global\long\def\Gcal{\mathcal{G}}%
\global\long\def\Hcal{\mathcal{H}}%
\global\long\def\Ical{\mathcal{I}}%
\global\long\def\Kcal{\mathcal{K}}%
\global\long\def\Lcal{\mathcal{L}}%
\global\long\def\Mcal{\mathcal{M}}%
\global\long\def\Ncal{\mathcal{N}}%
\global\long\def\Ocal{\mathcal{O}}%
\global\long\def\Pcal{\mathcal{P}}%
\global\long\def\Scal{\mathcal{S}}%
\global\long\def\Tcal{\mathcal{T}}%
\global\long\def\Xcal{\mathcal{X}}%
\global\long\def\Ycal{\mathcal{Y}}%
\global\long\def\Zcal{\mathcal{Z}}%
\global\long\def\i{i}%

\global\long\def\abf{\mathbf{a}}%
\global\long\def\bbf{\mathbf{b}}%
\global\long\def\cbf{\mathbf{c}}%
\global\long\def\fbf{\mathbf{f}}%
\global\long\def\qbf{\mathbf{q}}%
\global\long\def\gbf{\mathbf{g}}%
\global\long\def\ebf{\mathbf{e}}%
\global\long\def\lambf{\bm{\lambda}}%
\global\long\def\alphabf{\bm{\alpha}}%
\global\long\def\sigmabf{\bm{\sigma}}%
\global\long\def\thetabf{\bm{\theta}}%
\global\long\def\deltabf{\bm{\delta}}%
\global\long\def\lbf{\mathbf{l}}%
\global\long\def\ubf{\mathbf{u}}%
\global\long\def\pbf{\mathbf{\mathbf{p}}}%
\global\long\def\vbf{\mathbf{v}}%
\global\long\def\wbf{\mathbf{w}}%
\global\long\def\xbf{\mathbf{x}}%
\global\long\def\ybf{\mathbf{y}}%
\global\long\def\zbf{\mathbf{z}}%
\global\long\def\dbf{\mathbf{d}}%
\global\long\def\Wbf{\mathbf{W}}%
\global\long\def\Abf{\mathbf{A}}%
\global\long\def\Ubf{\mathbf{U}}%
\global\long\def\Pbf{\mathbf{P}}%
\global\long\def\Ibf{\mathbf{I}}%
\global\long\def\Ebf{\mathbf{E}}%
\global\long\def\sbf{\mathbf{s}}%
\global\long\def\Mbf{\mathbf{M}}%
\global\long\def\Nbf{\mathbf{N}}%
\global\long\def\Dbf{\mathbf{D}}%
\global\long\def\Qbf{\mathbf{Q}}%
\global\long\def\Lbf{\mathbf{L}}%
\global\long\def\Pbf{\mathbf{P}}%
\global\long\def\Xbf{\mathbf{X}}%
\global\long\def\Bbf{\mathbf{B}}%
\global\long\def\zerobf{\mathbf{0}}%
\global\long\def\onebf{\mathbf{1}}%


\global\long\def\abm{\bm{a}}%
\global\long\def\bbm{\bm{b}}%
\global\long\def\cbm{\bm{c}}%
\global\long\def\dbm{\bm{d}}%
\global\long\def\ebm{\bm{e}}%
\global\long\def\fbm{\bm{f}}%
\global\long\def\gbm{\bm{g}}%
\global\long\def\hbm{\bm{h}}%
\global\long\def\pbm{\bm{p}}%
\global\long\def\qbm{\bm{q}}%
\global\long\def\rbm{\bm{r}}%
\global\long\def\sbm{\bm{s}}%
\global\long\def\tbm{\bm{t}}%
\global\long\def\ubm{\bm{u}}%
\global\long\def\vbm{\bm{v}}%
\global\long\def\wbm{\bm{w}}%
\global\long\def\xbm{\bm{x}}%
\global\long\def\ybm{\bm{y}}%
\global\long\def\zbm{\bm{z}}%
\global\long\def\Abm{\bm{A}}%
\global\long\def\Bbm{\bm{B}}%
\global\long\def\Cbm{\bm{C}}%
\global\long\def\Dbm{\bm{D}}%
\global\long\def\Ebm{\bm{E}}%
\global\long\def\Fbm{\bm{F}}%
\global\long\def\Gbm{\bm{G}}%
\global\long\def\Hbm{\bm{H}}%
\global\long\def\Ibm{\bm{I}}%
\global\long\def\Jbm{\bm{J}}%
\global\long\def\Lbm{\bm{L}}%
\global\long\def\Obm{\bm{O}}%
\global\long\def\Pbm{\bm{P}}%
\global\long\def\Qbm{\bm{Q}}%
\global\long\def\Rbm{\bm{R}}%
\global\long\def\Ubm{\bm{U}}%
\global\long\def\Vbm{\bm{V}}%
\global\long\def\Wbm{\bm{W}}%
\global\long\def\Xbm{\bm{X}}%
\global\long\def\Ybm{\bm{Y}}%
\global\long\def\Zbm{\bm{Z}}%
\global\long\def\lambm{\bm{\lambda}}%
\global\long\def\alphabm{\bm{\alpha}}%
\global\long\def\albm{\bm{\alpha}}%
\global\long\def\taubm{\bm{\tau}}%
\global\long\def\mubm{\bm{\mu}}%
\global\long\def\inftybm{\bm{\infty}}%
\global\long\def\yrm{\mathrm{y}}%

\maketitle

\begin{abstract}
  Linear programming has been practically solved mainly by simplex and interior point methods. Compared with the weakly polynomial complexity obtained by the interior point methods, the existence of strongly polynomial bounds for the length of the pivot path generated by the simplex methods remains a mystery. In this paper, we propose two novel pivot experts that leverage both global and local information of the linear programming instances for the primal simplex method and show their excellent performance numerically. The experts can be regarded as a benchmark to evaluate the performance of classical pivot rules, although they are hard to directly implement. To tackle this challenge, we employ a graph convolutional neural network model, trained via imitation learning, to mimic the behavior of the pivot expert. Our pivot rule, learned empirically, displays a significant advantage over conventional methods in various linear programming problems, as demonstrated through a series of rigorous experiments.

\end{abstract}

\keywords{Simplex Method \and Linear Programming \and Imitation Learning \and Pivot \and Klee-Minty Cube}

\section{Introduction}\label{sec:intro}

Linear programming (LP) is among the most fundamental problems that has been well-studied in the field of optimization. LP is not only directly used across various industries but has also become an important cornerstone of mixed integer programming (MIP) and sequential linear programming (SLP) methods for solving nonlinear programming (NLP). Nowadays, most commercial \cite{copt, gurobi, nickel2022ibm, xpress2014fico} and open-source \cite{huangfu2018parallelizing, achterberg2009scip} solvers have implemented fast and stable LP solvers (software for solving LP) and constantly achieve new advancements for large-scale LP problems.

A general LP formulation solves the problem with only a linear objective function and linear constraints. It is well-known that these linear constraints geometrically form a polyhedron and if the optimal solution exists, it exists in one of the vertices which belong to basic solutions from an algebra point of view. The state-of-the-art methods for LP include the simplex methods, the interior-point methods (IPMs), and some recently developed first-order methods (FOMs) \cite{applegate2021practical, deng2022new}. For high accuracy and reliability, the simplex methods and IPMs are preferred and become two main classes of algorithms implemented in commercial solvers.

The simplex methods start from a basic solution (notice that some simplex methods do not require either primal or dual feasibility) and improve objective or feasibility by reaching a certain adjacent basic solution, which is called the pivot. The criterion for switching from the current basic solution to its neighbor is called the pivot rule. Different pivot rules greatly affect the performance of the simplex methods, so designing a smart pivot rule is one of the most significant simplex's tasks. From the theoretical aspect, whether there exists a strongly polynomial bound for the length of the pivot path, which represents the iteration number of the simplex methods, attracts much research interest but is still an open problem. Although simplex methods include different types (i.e., the primal, the dual, and the primal-dual simplex methods), most modern commercial solvers pay more attention to the primal and dual simplex methods.

Instead of moving between vertices, IPMs keep an interior point and walk along a central path approaching the optimal solution. Weakly polynomial complexity and excellent practical performance for LP are first achieved by IPMs simultaneously \cite{karmarkar1984new, ye2011interior, mehrotra1992implementation}. In practice, IPMs usually yield a dense primal-dual approximate solution. Modern commercial solvers tend to conduct a crossover from IPMs solution to a basic solution and run simplex methods for a sparse exact solution.

In this paper, we focus on designing smart pivot rules for the primal simplex method. It is believed that our study can be easily migrated to other types of simplex methods. The smart pivot rules are expected to generate short pivot paths for different kinds of LP instances in different scales and should not run intolerably slow.

\paragraph{Our contribution}
We propose a class of novel pivot experts that can be observed to outperform several classical and popular pivot rules. To modify the experts for practical use, we also apply machine learning methods to imitate the pivot expert. Our contribution can be summarized as follows.

\begin{itemize}
    \item First, we design novel pivot experts. Compared with classical pivot rules that only utilize local information, we consider a smart pivot rule should be able to combine global and local information together. Based on this idea, two pivot experts are proposed and can generate significantly shorter pivot paths than classical pivot candidates in a series of experiments. 
    The pivot paths generated by experts are also analyzed on Klee-Minty variants.

    \item Second, to the best of our knowledge, this paper is the first to combine imitation learning with dynamic pivot for general LP of different scales. Incorporating a graph convolutional neural network (GCNN) model, our learning aims to predict the experts' pivot behavior, which removes the requirement for global information. Experiments show that our practical version of pivot expert overall generates shorter pivot paths.
\end{itemize}


\paragraph{Organization of the paper}
The paper is organized as follows. Section \ref{sec:review} reviews related studies in simplex methods and machine learning methods to help optimization, especially for LP. Section \ref{sec:expert} describes the novel class of pivot experts and discusses its merits and demerits. Section \ref{sec:learn} provides an imitation learning method to help our idea of experts become practical. Section \ref{sec:experiments} presents twofold experiments to verify the superiority of our pivot experts and the learned pivot rule. Section \ref{sec:conclusion} concludes the paper and discusses some relative topics.

\paragraph{Notations}
Several commonly used notations are listed below. We use bold letters for vectors and matrices. Let $\Rbb^n$ denote the $n$-dimensional Euclidean space. We use $\bar{\Rbb}$ and $\underline{\Rbb}$ to express $\Rbb\cup\{+\infty\}$ and $\Rbb\cup\{-\infty\}$. Let $\xbf_j$ be the $j$th element of vector $\xbf$. We use $\xbf \geq \ybf$ to express the element-wise inequality $\xbf_{i} \geq \ybf_{i}$. Let $\zerobf$, $\onebf$, and $\inftybm$ be a vector of zeros, ones, and infinities. Let $\Ibf$ be the identity matrix and $\ebf_j$ be the $j$th column of $\Ibf$. The dimension of a vector or a matrix will be unspecified whenever it is clear from the context. $\|\cdot\|_\ell$ is $\ell$-norm (2-norm if $\ell$ is omitted) while $\abs{\cdot}$ is absolute value. Let $\Abf_{i,j}$ be the entry in the $i$th row and $j$th column of matrix $\Abf$. Let $\Abf_{j}$ be the $j$th column of matrix $\Abf$, and $\Abf_{\mathcal{I}}$ be the matrix formulated by columns $\Abf_{j}$ for $j\in\mathcal{I}$. Let $\Abf_{i,:}$ be the $i$th row of matrix $\Abf$.

\section{Related Work}\label{sec:review}

\subsection{Simplex Methods}

In this subsection, we will describe a series of pivot rules for LP in standard form
\begin{equation}\label{prob:std-lp}
    \begin{aligned}
        \min\ & \cbf^\top \xbf \\
        \st\ & \Abf\xbf = \bbf \\
        & \xbf \geq \zerobf,
    \end{aligned}
\end{equation}
where $\xbf\in\Rbb^n$ and $\Abf \in \Rbb^{m\times n}, \bbf \in \Rbb^m, \cbf\in\Rbb^n$. Let $\Bbf$ and $\Nbf$ be indices of basic and non-basic variables. This formulation is used for academic research, but will not be preferred in modern LP solvers. Our implementation of pivot experts considers a more practical formulation which will be described in Section \ref{sec:expert} later.

\paragraph{Simplex methods}
Studies on LP and the simplex methods date back to the very beginning of modern operation research. In 1947, Dantzig \cite{dantzig1963linear} revealed the significance of LP and provided the framework of the primal simplex method. The primal simplex method uses Phase I to achieve a basic primal feasible solution and maintains primal feasibility during Phase II to solve the LP instance. The dual simplex method solves the LP in the view of duality, which gets a basic dual feasible solution and improves primal feasibility during Phase II. The primal-dual simplex method aims to get rid of Phase I and improves primal or dual feasibility at each iteration. For more detail, we highly recommend Pan's book \cite{pan_linear_2023} for basic knowledge and recent development of LP. In this paper, we concentrate on the primal simplex method.

\paragraph{Pivot rules in primal simplex method}
The simplex methods switch between adjacent basic solutions, which is called the pivot, at each iteration. Pivot is algebraically selecting a non-basic variable to enter the basis, then conducting a ratio test evaluating distance to go, and letting one basic variable leave the basis. How to choose among adjacent basic solutions (i.e., to design a pivot rule) is key to a successful simplex method.

When Dantzig proposes primal simplex method, he also provides a pivot rule to choose candidate with the most negative reduced cost $\bar{\cbf}_{j} = \cbf_{j} - \cbf_{\Bbf}^\top\Abf_{\Bbf}^{-1}\Abf_{j}$, which is called Dantzig's rule. To avoid cycling in a pivot path, the famous Bland's rule \cite{bland1977new} of choosing the candidate with minimum index and other lexicographic pivot rules are proposed. These anti-cycling rules make simplex terminate in finite steps but perform poorly in real applications. Since other practically anti-cycling methods like perturbation work well, more practical interest is attracted by generating shorter pivot paths rather than theoretically finite termination.

The most widely used pivot rule in modern simplex solvers is the steepest-edge rule \cite{goldfarb1977practicable}. It has been observed to generate relatively short pivot paths. The steepest-edge rule chooses candidate with the most negative $\frac{\bar{\cbf}_{j}}{\sqrt{\|\Abf_{\Bbf}^{-1}\Abf_{j}\|^2+1}}$. If non-basic variable $\xbf_{j}$ enters the basis, the point will move along $\dbf = \begin{bmatrix}
    \dbf_{\Bbf} \\ \dbf_{\Nbf}
\end{bmatrix} = \begin{bmatrix}
    -\Abf_{\Bbf}^{-1}\Abf_{j} \\ \ebf_{j}
\end{bmatrix}$. Calculating $\cos\langle \cbf, \dbf \rangle = \frac{1}{\|\cbf\|}\frac{\bar{\cbf}_j}{\sqrt{\|\Abf_{\Bbf}^{-1}\Abf_{j}\|^2+1}}$ and noticing that $\|\cbf\|$ is a constant, steepest-edge can be explained as always moving in the descending direction most parallel to $\cbf$. Further study for the steepest-edge rule \cite{forrest1992steepest} includes efficient implementation, choice of the norm, dual version of steepest-edge, etc.

Another pivot rule called the greatest improvement rule \cite{jeroslow1973simplex} can also generate short pivot paths. Like strong branching \cite{achterberg2005branching} in MIP, the greatest improvement rule prefers a candidate that brings the greatest improvement in objective value to enter the basis. However, sometimes too much greed is not the best option (as will be seen in Section \ref{sec:experiments}). Besides, to calculate the improvement, a ratio test for each candidate variable is needed, which is often too expensive for the simplex method.

As the increasing scale of LP has hit the limits of computer power, the rules mentioned above become inefficient because of either slow calculation or intolerably long paths. Devex rule \cite{harris1973pivot} and the largest distance rule \cite{roos1986pivoting, pan2008largest} are therefore proposed. Devex rule inexactly approximates the score in steepest-edge and thus can update faster. The largest distance rule regards negative reduced cost $\bar{\cbf}_j$ as violating the $j$th constraint $\Abf_{j}^\top \ybf \leq \cbf_{j}$ in the dual problem of LP. Therefore, we can use the distance formula from point to hyperplane as a degree of violation to calculate the largest distance score $\frac{\bar{\cbf}}{\|\Abf_{j}\|}$ and choose the most negative one to enter the basis. Notice that the denominator stays the same and only requires to be calculated once.

\paragraph{Worst cases of pivot rules}
With so many rules accumulated and a wide variety of manifestations observed in practice, researchers are puzzled by the complexity of the simplex methods. In other words, what is the worst possible path for the simplex methods? Or generally, can LP be solved with strongly polynomial algorithms?

These problems are surprisingly difficult to give a general satisfying answer even though we have already known that LP has weakly polynomial bounds guaranteed by IPMs. In practice, the pivot numbers are observed to be polynomial with respect to the number of variables and constraints. Some researchers believe that simplex can lead to a strongly polynomial bound for LP. Unfortunately, worst cases with pivot paths of exponential lengths have been discovered for most deterministic pivot rules \cite{klee1972good, avis1978notes, goldfarb1979worst, roos1990exponential}. After introducing randomization and parameterized LP, several sub-exponential bounds \cite{matouvsek1992subexponential, kalai1992subexponential} or weakly polynomial bounds \cite{kelner2006randomized} have been derived for general LP. For some special LP classes, certain pivot rules are proved to be strongly polynomial \cite{ye2011simplex, kitahara2013bound}. In short, analyzing the complexity of simplex is still a long way to go.

\subsection{Machine Learning for Mathematical Optimization}

The development of optimization helps smarter machine learning models train better for larger and harder tasks. Interestingly, in the opposite direction, machine learning methods also make efforts to accelerate or improve optimization methods. This field of research can be called machine learning for mathematical optimization (ML4MO). In this subsection, we review several ways to apply machine learning to optimization. Machine learning models in ML4MO can be classified according to the encoding methods of the optimization problem as classical models and GCNN models. Learning methods used in ML4MO lie in supervised learning, imitation learning, and reinforcement learning. Many studies in ML4MO help solve MIP since it is a harder and more common problem. Although there have not been many fusions of machine learning and LP, some preliminary attempts are made and deserve mention.

\paragraph{Classical model-based ML4MO}
Early studies extract feature vectors from problems and use classical models for prediction. Di Liberto et al. \cite{di2016dash} extract feature vectors of nodes in branch-and-bound (BnB) trees for the clustering method to adaptively select branching rules during BnB in MIP solving procedure. Alvarez et al. \cite{alvarez2017machine} use ExtraTrees to imitate strong branching behavior for MIP. He et al. \cite{he2014learning} learn a node selection strategy from the shortest path in BnB trees built by SCIP towards the node containing the optimal solution. They conduct a DAgger framework for better learning results. Khalil et al. \cite{khalil2016learning} imitate strong branching via learning a variable ranking function by support vector machines (SVM) rather than directly scoring all candidates. Zarpellon et al. \cite{zarpellon2021parameterizing} add more features of BnB trees themselves for learning a more general branching rule. Berthold et al. \cite{berthold2022learning} use random forest to decide whether to use local cuts when solving MIP.

\paragraph{GCNN model-based ML4MO}
A new way to encode problems is proposed by Gasse et al. \cite{gasse2019exact} to imitate strong branching. They creatively encode MIP to be a bipartite graph. The bipartite graph contains almost all information of the original problem and thus avoids loss of information which the classical methods often suffer from. The graph has two groups of nodes representing variables and constraints. An edge with feature $\Abf_{i,j}$ links a variable and a constraint if the coefficient $\Abf_{i,j}$ of the variable in the constraint is non-zero. Other information such as objective and right-hand side of constraints will be added to the features of nodes. With a graph as input, GCNN models are naturally adopted to perform more comprehensive feature extraction and are ready for subsequent models to make final decisions.

Gasse's work inspires a new stream of ML4MO. Gupta et al. \cite{gupta2020hybrid} replace GCNN in the BnB child nodes with a more CPU-friendly multi-layer perceptron (MLP) model so that the strategy can run more efficiently on large-scale problems. Ding et al. \cite{ding2020accelerating} modify the bipartite graph by regarding the objective function as a single node to form a tripartite graph to predict partial solutions to generate branching hyperplanes. Nair et al. \cite{nair2020solving} use GCNN to design two strategies including neural diving and neural branching. Neural diving predicts a partial solution and gets a feasible solution from sub-MIP for the warm start of BnB. Neural branching appends SelectiveNet after GCNN to imitate strong branching. Sonnerat et al. \cite{sonnerat2021learning} refine Nair's neural diving by adding a large neighborhood search (LNS) for a better solution. Paulus et al. \cite{paulus2023learning} predict a solution by GCNN and design a diving method according to the predicted solution.

\paragraph{Machine learning for LP and simplex}
Several attempts have been made to accelerate LP utilizing machine learning methods. Most of them study pivot in the primal simplex method. Adham et al. \cite{adham2021machine} use boosted trees and neural networks to predict the best pivot rule for each LP instance but the approach is a one-shot decision and lacks flexibility. Suriyanarayana et al. \cite{suriyanarayana2022reinforcement} use reinforcement learning to dynamically switch between Dantzig's rule and steepest-edge rule for solving LP relaxation of non-Euclidean TSPs with five cities. However, it is only proof of concept that is not suitable for larger problems or problems with different scales. Li et al. \cite{li2022rethinking} use Monte Carlo tree search (MCTS) to directly decide which candidate will enter the basis. For each new LP instance, MCTS explores slowly at every single pivot.

The state in both Suriyanarayana's and Li's reinforcement learning approaches is based on simplex tableau directly, which is not scalable for large-scale LP. The bipartite graph and GCNN can be a more reasonable tool to encode LP for its permutation invariance and scalability. In theory, Chen et al. \cite{chen2022representing} reveal the potential power of GCNN in distinguishing LP with different characteristics. In practice, Fan et al. \cite{fan2023smart} use GCNN to predict a better initial basis for the primal simplex method, which is the preparatory work for Phase II.

\paragraph{Machine learning for column generation}
Column generation (CG) can be viewed as a generalized version of the simplex. The most similar learning framework to ours is in this category. Morabit et al. \cite{morabit2021machine} use GCNN to select better columns in CG oracle while ours use GCNN to decide a candidate to enter the basis. However, there exist some differences between Morabit's work and ours. First, CG deals with MIP and is more like a partial pricing version of the simplex method. Our work focuses on improving the full pricing pivot rule. Second, the expert rule in Morabit's work prefers improvement in objective value with fewer columns at each CG iteration, which is more like the greatest improvement rule. In our experiments, the greatest improvement rule is not an expert in the simplex method while our pivot experts which leverage both global and local information perform significantly better.

\section{Smart Pivot Experts}\label{sec:expert}

\subsection{Primal Simplex Method}
We consider a general LP formulation
\begin{equation} \label{prob:lp}
    \begin{aligned}
        \min\ & \cbf^\top \xbf \\
        \st\ & \Abf\xbf = \bbf \\
        & \lbf \leq \xbf \leq \ubf,
    \end{aligned}
\end{equation}
where $\xbf\in\Rbb^n$ and $\Abf \in \Rbb^{m\times n}, \bbf \in \Rbb^m, \cbf\in\Rbb^n,\lbf\in\underline{\Rbb}^n,\ubf\in\bar{\Rbb}^n$. \eqref{prob:lp} is more user-friendly than \eqref{prob:std-lp}, so we will derive our pivot experts and conduct experiments based on this formulation.

\paragraph{Phase I: find a basic feasible solution}
Phase I aims to find a basic feasible solution for Phase II in the primal simplex method. In fact, multiple ways including the big-M method and heuristics are designed to achieve the purpose. However, those methods form an independent topic that is beyond our discussion. Here we consider the classical auxiliary LP method.

Let $\xbf_j^0 = \begin{cases}
    \ubf_j, & \text{if } \lbf_j = -\infty \text{ and } \ubf_j < +\infty \\
    \lbf_j, & \text{if } \lbf_j > -\infty \\
    0, & \text{otherwise}
\end{cases}$, and add artificial variables $\zbf$ to form the auxiliary LP \eqref{prob:alp}
\begin{equation}
    \begin{aligned}
        \min\ & \onebf^\top \zbf \\
        \st\ & \Abf\xbf + \Ebf \zbf = \bbf \\
        & \lbf \leq \xbf \leq \ubf \\
        & \zbf \geq \zerobf,
    \end{aligned} \label{prob:alp}
\end{equation}
where $\Ebf$ is a diagonal matrix with $\Ebf_{i,i} = \begin{cases}
    1, & \text{if } \bbf_i - \Abf_{i,:}\xbf^0 \geq 0 \\
    -1, & \text{otherwise}
\end{cases}$. Let $\zbf^0_i = \abs{\bbf_i - \Abf_{i,:}\xbf^0}$, we have a basic feasible solution $(\xbf^0,\zbf^0)$ for \eqref{prob:alp}. Then we apply the primal simplex method and check the optimal value. If the optimal value is 0, we gradually pivot to let the remaining basic $\zbf$ leave the basis and remove redundant constraints to get a basic feasible solution for \eqref{prob:lp}. Otherwise, the original problem is primal infeasible.

\paragraph{Phase II: solve the LP}
Phase II starts to solve \eqref{prob:lp} after a basic feasible solution is obtained from Phase I. At each pivot, we check the reduced cost $\bar{\cbf}$ of those non-basic variables and pick those candidates which have negative (positive) $\bar{\cbf}$ and are equal to its lower (upper) bound.

A variable to enter basis is selected according to a certain pivot rule and afterwards a ratio test is conducted. If there occurs a tie, we will choose the one with minimum (maximum) index to enter (leave) the basis. Then the inverse of the basic matrix and reduced cost are updated for the next pivot. The method terminates when dual feasibility is reached (i.e., no variable can enter the basis).

Currently, there is no satisfying simplex solver that supports dynamically switching among various pivot rules at each pivot. Therefore, we implement the above two-phase revised simplex in our own primal simplex solver prototype. More details are presented in Appendix \ref{apx:solver}.

\subsection{Designing Smart Pivot Experts}
The existing pivot rules all consider only local information. Here the term "local" refers to the information that describes the landscape around the current basic feasible solution. If a pivot rule makes a myopic decision, the basic feasible solution may lead to being stuck in a rugged area in the future.

The main idea for designing a smart pivot expert is to provide global information for it. Some trials proposed by others include tree search for future information \cite{li2022rethinking}, using interior point information \cite{todd1990dantzig, roos1986pivoting, tamura1988dual}, or choosing more than one variable at one time to enter the basis \cite{yang2020double}. However, there is something the most global but easily overlooked ---- the optimal basis.

It is natural to doubt that there is no need to solve the LP if an optimal basis is known. Our point is that we regard solved LP instances as historical data and want to obtain smart pivot experts from them. The absence of optimal basis only prevents us from directly using these experts, but that is a separate issue from "how expert a pivot rule will be if it knows global information". From the data-driven perspective, the challenge can be overcomed in Section \ref{sec:learn} by imitation learning. In this subsection, we purely focus on designing smart pivot experts.

With the optimal basis at each iteration, the smart pivot rule can be guided by the following two goals:
\begin{enumerate}
    \item When selecting a candidate to enter the basis, a smart pivot rule should let the basic variable in the optimal basis enter first.
    \item To select a variable to leave the basis when a tie occurs in the ratio test, a smart pivot rule should let the non-basic in the optimal basis leave first.
\end{enumerate}
The smart pivot rule will greedily bring the current basis as close to the optimal basis (in terms of the difference in the basic indices) as possible from a global perspective. Theorem \ref{thm:idx-in} guarantees that such a variable can always be found as long as the objective value is not optimal.
\begin{theorem}\label{thm:idx-in}
    Given the optimal basis, if the current objective value is not optimal, there must exist a variable in the optimal basis that can enter the current basis immediately.
\end{theorem}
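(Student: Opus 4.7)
The plan is to analyze the displacement $\dbf = \xbf^* - \xbf$, where $\xbf^*$ is the basic feasible solution corresponding to the given optimal basis $\mathcal{B}^*$ and $\xbf$ is the current basic feasible solution corresponding to $\mathcal{B}$. Both points are primal feasible, so $\Abf\dbf = \zerobf$, and the non-optimality hypothesis yields $\cbf^\top \dbf = \cbf^\top \xbf^* - \cbf^\top \xbf < 0$.

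Rewriting through the reduced cost vector $\bar{\cbf} = \cbf - \Abf^\top (\Abf_{\mathcal{B}}^{-1})^\top \cbf_{\mathcal{B}}$ of the current basis, and using $\Abf\dbf = \zerobf$ together with $\bar{c}_j = 0$ for $j \in \mathcal{B}$, the inequality collapses to $\sum_{j \in \mathcal{N}} \bar{c}_j\, d_j < 0$. This already forces some non-basic $j$ to contribute negatively: whenever $x_j = l_j$ it gives $d_j > 0$ and $\bar{c}_j < 0$, and whenever $x_j = u_j$ it gives $d_j < 0$ and $\bar{c}_j > 0$, so such a $j$ is automatically an admissible entering candidate in the sense defined in Section \ref{sec:expert}.

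To sharpen this to an index in $\mathcal{B}^* \setminus \mathcal{B}$, I would split $\mathcal{N}$ as $(\mathcal{N} \cap \mathcal{B}^*) \cup (\mathcal{N} \cap \mathcal{N}^*)$ and argue that every term indexed by the second set is non-negative. For $j$ non-basic under both bases, either $x_j$ and $x_j^*$ sit at the same bound (so $d_j = 0$), or they sit at opposite bounds and one combines the KKT sign condition on $\bar{c}_j^*$ at $\mathcal{B}^*$ with the identity $\bar{c}_j - \bar{c}_j^* = (\ybf^* - \ybf)^\top \Abf_j$ (noting that $\ybf^* - \ybf$ is annihilated by every column indexed in $\mathcal{B} \cap \mathcal{B}^*$) to pin down $\bar{c}_j d_j \geq 0$. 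Consequently $\sum_{j \in \mathcal{B}^* \setminus \mathcal{B}} \bar{c}_j d_j < 0$, so some $j^\star \in \mathcal{B}^* \setminus \mathcal{B}$ satisfies $\bar{c}_{j^\star} d_{j^\star} < 0$, and the bound-by-bound check of the previous paragraph certifies $j^\star$ as eligible to enter.

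The main obstacle I expect is exactly the sign bookkeeping on $\mathcal{N} \cap \mathcal{N}^*$: the reduced cost there is measured against $\mathcal{B}$ while the bound pattern is controlled by $\mathcal{B}^*$, so the dual information at $\mathcal{B}^*$ must be transported back carefully through the dual-vector identity above. This step will likely rely on a mild non-degeneracy assumption at $\mathcal{B}^*$ (or a judicious choice among multiple optimal bases when they exist) to prevent the relevant inequality from collapsing into equality, and it is where I would spend the most care in the full write-up.
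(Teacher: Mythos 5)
The first two paragraphs of your proposal recover exactly the paper's core computation: expanding $\cbf^\top\xbf^*$ in the current basis produces the identity
\begin{equation*}
\cbf^\top\xbf^* = \cbf^\top\xbf + \bar{\cbf}^\top_{N_\lbf}\left(\xbf^*_{N_\lbf}-\lbf_{N_\lbf}\right) + \bar{\cbf}^\top_{N_\ubf}\left(\xbf^*_{N_\ubf}-\ubf_{N_\ubf}\right) = \cbf^\top\xbf + \sum_{j\in\mathcal{N}}\bar{c}_j d_j,
\end{equation*}
so non-optimality forces some nonbasic $j$ with $\bar{c}_j d_j < 0$, and the bound/sign check shows that such a $j$ is an admissible entering candidate. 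That part is correct and is essentially the paper's argument.

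The third paragraph is where you depart from the paper and where the argument breaks. You try to push the negative term into $\mathcal{N}\cap\mathcal{B}^*$ by claiming every term over $\mathcal{N}\cap\mathcal{N}^*$ is nonnegative, transporting sign information from $\bar{c}_j^*$ to $\bar{c}_j$ via $\bar{c}_j - \bar{c}_j^* = (\ybf^*-\ybf)^\top\Abf_j$. But that identity controls a difference, not a sign: for $j$ nonbasic in both bases yet at opposite bounds, the \emph{current} reduced cost $\bar{c}_j$ is simply unconstrained, and $\bar{c}_j d_j$ can be the only negative term. Concretely, with constraints $\xbf_1+\xbf_3=1$, $\xbf_2+\xbf_4=1$, box bounds $0\le\xbf_i\le 1$, and $\cbf=(1,-1,0,0)$, the optimal BFS is $\xbf^*=(0,1,1,0)$ with basis $\mathcal{B}^*=\{3,4\}$, while the current BFS $\xbf=(1,0,0,1)$ has the \emph{same} basis $\{3,4\}$ (degenerate). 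Here $\mathcal{N}\cap\mathcal{B}^*=\emptyset$: the entire decrease comes from $\{1,2\}\subseteq\mathcal{N}\cap\mathcal{N}^*$ at opposite bounds, and no index in $\mathcal{B}^*$ can enter. So the sharpened claim $j^\star\in\mathcal{B}^*\setminus\mathcal{B}$ is false in the presence of two-sided bounds, and no choice among optimal bases or non-degeneracy hypothesis at $\mathcal{B}^*$ rescues it (the example's $\mathcal{B}^*$ is unique).

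The paper's own proof sidesteps this by settling for the weaker — and sufficient — conclusion: the negative term yields $d_{j^k}\ne 0$, hence $j^k\in N^k_\lbf\setminus N^*_\lbf$ or $j^k\in N^k_\ubf\setminus N^*_\ubf$, i.e., $j^k$ is not at the \emph{same} bound under the optimal basis; it may be basic under $\mathcal{B}^*$ or at the opposite bound. Either way its status moves one unit closer to optimal, which is precisely what Property~\ref{prop:mono} needs. You should drop the $\mathcal{N}\cap\mathcal{N}^*$ nonnegativity claim and the search for $j^\star\in\mathcal{B}^*$, and instead terminate the proof right after observing that the admissible entering index $j$ with $\bar{c}_j d_j < 0$ necessarily has $\xbf_j\ne\xbf^*_j$, so its current bound-status differs from its optimal status. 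The theorem's phrase ``in the optimal basis'' should be read in this looser, status-changing sense; read literally it is slightly stronger than what is (or can be) proved.
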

\begin{proof}
Let $(B^k, N_\lbf^k, N_\ubf^k)$ be the partitions of basic, non-basic reaching lower bound and non-basic reaching upper bound variables at iteration $k$ and $(B^*, N_\lbf^*, N_\ubf^*)$ be the partitions of basic and non-basic variables of the optimization solution $\xbf^*$. Let $\xbf^*$ be partitioned using $(B^k, N_\lbf^k, N_\ubf^k)$, i.e.,
\begin{equation*}
    \xbf^* = \begin{bmatrix}
        \xbf^*_{B^k} \\ \xbf^*_{N^k_\lbf} \\ \xbf^*_{N^k_\ubf}
    \end{bmatrix}.
\end{equation*}
We have
\begin{equation*}
    \xbf^*_{B^k} = \Abf_{B^k}^{-1}\bbf - \Abf_{B^k}^{-1}\Abf_{N^k_\lbf}\xbf^*_{N^k_\lbf} - \Abf_{B^k}^{-1}\Abf_{N^k_\ubf}\xbf^*_{N^k_\ubf}.
\end{equation*}

Calculate objective function
\begin{align*}
    \cbf^\top\xbf^* &= \cbf^\top_{B^k}\xbf^*_{B^k} + \cbf^\top_{N^k_\lbf}\xbf^*_{N^k_\lbf} + \cbf^\top_{N^k_\ubf}\xbf^*_{N^k_\ubf} \\
    &= \cbf^\top_{B^k}\Abf_{B^k}^{-1}\bbf - \cbf^\top_{B^k}\Abf_{B^k}^{-1}\Abf_{N^k_\lbf}\xbf^*_{N^k_\lbf} - \cbf^\top_{B^k}\Abf_{B^k}^{-1}\Abf_{N^k_\ubf}\xbf^*_{N^k_\ubf} + \cbf^\top_{N^k_\lbf}\xbf^*_{N^k_\lbf} + \cbf^\top_{N^k_\ubf}\xbf^*_{N^k_\ubf} \\
    &= \cbf^\top_{B^k}\left(\Abf_{B^k}^{-1}\bbf - \Abf_{B^k}^{-1}\Abf_{N^k_\lbf}\lbf_{N^k_\lbf} - \Abf_{B^k}^{-1}\Abf_{N^k_\ubf}\ubf_{N^k_\ubf}\right) + \cbf^\top_{N^k_\lbf}\lbf_{N^k_\lbf} + \cbf^\top_{N^k_\ubf}\ubf_{N^k_\ubf} \\
    &\quad - \cbf^\top_{B^k}\Abf_{B^k}^{-1}\Abf_{N^k_\lbf}\left(\xbf^*_{N^k_\lbf} - \lbf_{N^k_\lbf} \right) - \cbf^\top_{B^k}\Abf_{B^k}^{-1}\Abf_{N^k_\ubf}\left(\xbf^*_{N^k_\ubf} - \ubf_{N^k_\ubf} \right) \\
    &\quad + \cbf^\top_{N^k_\lbf}\left(\xbf^*_{N^k_\lbf} - \lbf_{N^k_\lbf} \right) + \cbf^\top_{N^k_\ubf}\left(\xbf^*_{N^k_\ubf} - \ubf_{N^k_\ubf} \right) \\
    &= \cbf^\top\xbf^k + \bar{\cbf}^\top_{N^k_\lbf}\left(\xbf^*_{N^k_\lbf} - \lbf_{N^k_\lbf} \right) + \bar{\cbf}^\top_{N^k_\ubf}\left(\xbf^*_{N^k_\ubf} - \ubf_{N^k_\ubf} \right),
\end{align*}
where $\bar{\cbf}$ denotes the reduced cost. Then we have
\begin{equation}\label{eq: rc}
    \bar{\cbf}^\top_{N^k_\lbf}\left(\xbf^*_{N^k_\lbf} - \lbf_{N^k_\lbf} \right) + \bar{\cbf}^\top_{N^k_\ubf}\left(\xbf^*_{N^k_\ubf} - \ubf_{N^k_\ubf} \right) \leq 0.
\end{equation} 
Consider the most negative term in \eqref{eq: rc} and $\xbf^k$ is not optimal. If $\cbf^\top \xbf^k > \cbf^\top \xbf^*$, \eqref{eq: rc} will become a strict inequality and there exists $j^k\in N^k_\lbf \setminus N^*_\lbf$ such that $\bar{\cbf}_{j^k}\left(\xbf_{j^k}^* - \lbf_{j^k}\right) < 0$ or $j^k\in N^k_\ubf \setminus N^*_\ubf$ such that $\bar{\cbf}_{j^k}\left(\xbf_{j^k}^* - \ubf_{j^k}\right) < 0$. If $\cbf^\top \xbf^k = \cbf^\top \xbf^*$ and each term in \eqref{eq: rc} is 0, we can conduct steepest-edge pivot since we have already reached the optimal face.
\end{proof}

Yang provides similar observation and proof for standard LP formulation \eqref{prob:std-lp} in Remark 3.1 of \cite{yang2020double}, but does not continue to make full use of it. We modify his proof for a more practical formulation of LP and design smart pivot experts based on it.

\paragraph{Designing smart pivot experts}
We design two pivot experts based on two goals and Theorem \ref{thm:idx-in}. Before presenting details, we have to point out that local information is still important. During the development of simplex, much valuable local information has been proposed such as reduced cost and steepest-edge score. Given the optimal basis, our pivot experts combine global information and local information together.

The first pivot expert (Expert I) satisfies the first goal and then tries to satisfy the second. More precisely, it chooses the candidate among the optimal basis with the best steepest-edge score. After the ratio test, it removes the non-basic variable in the optimal basis as far as possible.

The second pivot expert (Expert II) considers the two goals at the same time. It will conduct a ratio test for each candidate in the optimal basis and give preference to those that can remove non-basic variables in the optimal basis if there are any. After candidates are filtered by the two goals, it will choose the one with the best steepest-edge score.

The two experts run at different speeds. Expert I can be calculated at a similar speed as the steepest-edge rule while Expert II runs more slowly and close to the greatest improvement rule (since they both need multiple ratio tests). The two pivot experts share Property \ref{prop:mono} of generating paths with monotone \# DiffOpt defined in Definition \ref{def:diffopt}.  
Several experiments in Section \ref{sec:experiments} will show the superiority of our pivot experts for overall generating shorter paths compared with other classical pivot rules.

\begin{definition}[\# DiffOpt] \label{def:diffopt}
    Let the status vector $\textbf{sta}$ of the vertex $\xbf$ be $\textbf{sta}_i = \begin{cases}
        0, & \text{if non-basic $\xbf_i=\lbf_i$} \\
        1, & \text{if $\xbf_i$ is basic} \\
        2, & \text{if non-basic $\xbf_i=\ubf_i$}
    \end{cases}$. Given the optimal basis, \# DiffOpt is defined as 1-norm of the difference between the status vectors of $\xbf$ and the optimal basis.
\end{definition}

\begin{property}\label{prop:mono}
    Given the optimal basis, before the current objective is optimal, \# DiffOpt is monotonically decreasing.
\end{property}
\begin{proof}
    At each pivot, the basic statuses of at most two variables will change. One enters the basis while one leaves the basis. From Theorem \ref{thm:idx-in}, we know the status of the entering variable is closer to the optimal basis, which will decrease \# DiffOpt by 1, while the leaving variable will increase \# DiffOpt by at most 1. Taken together, the property is proven.
\end{proof}

Here we claim that the two smart pivot experts have independent potential value. They can be the benchmark to estimate the improvement space of the primal simplex method. They can also provide expert labels for other machine learning methods to imitate. If a quick guess of the optimal basis is available by FOMs or machine learning, these experts become practical immediately. A solver developer may also use IPMs to accurately predict the optimal basis and apply our expert rules for simplex analysis and improvement.

At the end of this subsection, we emphasize that the term "expert" refers to overall better performance rather than total transcendence. Recalling the existence of worst cases, it is almost impossible for a pivot rule to completely beat another rule in every single LP instance. Maros \cite{maros2012computational} suggests combining different pivot rules if there are signs of benefit, which is a parallel technical route to ours.

\subsection{Pivot Experts on Klee-Minty Cube Variants}
To further illustrate the value of global information, a linear upper bound is provided for the length of our pivot experts' pivot path on Klee-Minty (KM) cube variants which are usually the worst cases for classical pivot rules.

\paragraph{KM cube variants}
KM cube variants are a well-known class of squashed cubes which usually lead to poor performance of some pivot rules. Example \ref{example:KM} \cite{vanderbei2020linear} and \ref{example:AC} \cite{avis1978notes} encompass a KM variant for Dantzig's rule and the Avis-Chv\'atal polytope for Bland's rule.
\begin{example} \label{example:KM}
    Using Dantzig's rule starting at $\xbf_i^0=\begin{cases}
        0, & \text{for }i=1,\cdots,n \\
        100^{i-n-1}, & \text{for }i=n+1,\cdots,2n
    \end{cases}$, the LP \eqref{prob:KM} requires $2^n - 1$ pivots to reach the optimal solution.
    \begin{equation} \label{prob:KM}
      \begin{aligned}
          \min \  & \sum_{j=1}^n 10^{n-j} \xbf_j                                         \\
          \st\    & 2 \sum_{j=1}^{i-1} \xbf_j + \xbf_i + \xbf_{n+i} = 100^{i-1},\ i=1,\cdots,n \\
                  & \xbf_i \geq 0,\ i=1,\cdots,2n.
      \end{aligned}
    \end{equation}
\end{example}

\begin{example} \label{example:AC}
    Using Bland's rule starting at $\xbf_i^0=\begin{cases}
        0, & \text{for }i=1,\cdots,n \\
        1, & \text{for }i=n+1,\cdots,2n
    \end{cases}$, the pivot number for solving the LP \eqref{prob:AC} with $\epsilon\in(0,0.5)$ pivots is bounded from below by the $n$th Fibonacci number.
    \begin{equation} \label{prob:AC}
      \begin{aligned}
          \min \  & \sum_{j=1}^n \epsilon^{n-j} \xbf_j                                          \\
          \st\    & 2 \sum_{j=1}^{i-1} \epsilon^{i-j} \xbf_j + \xbf_i + \xbf_{n+i} = 1,\ i=1,\cdots,n \\
                  & \xbf_i \geq 0,\ i=1,\cdots,2n.
      \end{aligned}
    \end{equation}
\end{example}

These KM cubes share some similar properties. First, the feasibility set is combinatorially equivalent to the standard $n$-dimensional cube $\Ccal_n = \{(\xbf, \ybf)\in\Rbb^n\times\Rbb^n: \xbf + \ybf = \onebf,\ \xbf, \ybf \geq \zerobf\}$, which means there exists a one-to-one correspondence between their faces. Second, each vertex is non-degenerate. The standard cube $\Ccal_n$ here is obtained via adding slacks $\ybf$ in the cube $[0,1]^n$. $\Ccal_n$ has $2^n$ vertices whose first $n$ elements are $\xbf\in\{0,1\}^n$ and $\ybf = \onebf - \xbf$.

\paragraph{The experts' linear upper bound on KM cubes}
The main idea for deriving a linear upper bound for the pivot experts on KM cubes is to analyze the length of paths with monotone \# DiffOpt. The proof process is divided into three steps. In essence, we start by bounding path lengths on the basic cube $\Ccal_n$, extend that to polytopes combinatorially equivalent to $\Ccal_n$, and then apply it to the pivot experts in the KM setting under certain mild assumptions. This allows us to derive an overall linear upper bound on KM cubes.

Lemma \ref{lemma:diffopt-box} describes an efficient way to calculate \# DiffOpt on $\Ccal_n$.
\begin{lemma} \label{lemma:diffopt-box}
    For $\Ccal_n$ with any optimal basis $\Bbf^*$ and the corresponding optimal solution $(\xbf^*,\ybf^*)$, \# DiffOpt of any vertex $(\xbf,\ybf)$ is equal to $\|(\xbf,\ybf) - (\xbf^*,\ybf^*)\|_1$.
\end{lemma}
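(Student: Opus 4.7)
The plan is to exploit the very rigid structure of vertices of $\Ccal_n$: since the constraint $\xbf_i + \ybf_i = 1$ couples exactly one $\xbf$-variable with exactly one $\ybf$-variable and there are no upper bounds, every basic feasible solution must contain, for each index $i$, exactly one of $\{\xbf_i,\ybf_i\}$ in the basis and the other at its lower bound $0$. Consequently every vertex $(\xbf,\ybf)$ of $\Ccal_n$ satisfies $(\xbf_i,\ybf_i)\in\{(1,0),(0,1)\}$, and the status vector $\textbf{sta}$ from Definition \ref{def:diffopt} only takes values in $\{0,1\}$ with $\textbf{sta}_{\xbf_i}+\textbf{sta}_{\ybf_i}=1$.

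Using this, I would evaluate both quantities coordinate by coordinate. For a fixed $i$, compare $(\xbf_i,\ybf_i)$ with $(\xbf^*_i,\ybf^*_i)$. If $\xbf_i=\xbf^*_i$ (equivalently $\ybf_i=\ybf^*_i$), then the statuses at positions $\xbf_i$ and $\ybf_i$ agree with the optimal ones, contributing $0$ to \# DiffOpt, while the values also agree, contributing $0$ to $\|\cdot\|_1$. If $\xbf_i\neq\xbf^*_i$, then one of the pairs is $(1,0)$ and the other is $(0,1)$, so both status components flip, contributing $2$ to \# DiffOpt; simultaneously $|\xbf_i-\xbf^*_i|+|\ybf_i-\ybf^*_i|=2$ contributes $2$ to the 1-norm. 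Summing over $i$, both sides equal $2\cdot|\{i:\xbf_i\neq\xbf^*_i\}|$, establishing the equality.

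There is no serious obstacle here; the only thing worth being careful about is confirming that an optimal basis of $\Ccal_n$ must have the one-per-pair form. This follows because the constraint matrix of $\Ccal_n$ is block diagonal with $1\times 2$ blocks $[\,1\;1\,]$, so any $n\times n$ non-singular submatrix is forced to pick exactly one column from each block, and hence the basis status and the value of the vertex determine one another at each coordinate. Once that observation is in place, the coordinatewise bookkeeping above immediately yields the claim.
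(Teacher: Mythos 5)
Your proof is correct and follows essentially the same route as the paper: both hinge on the observation that for a vertex of $\Ccal_n$ the status vector from Definition~\ref{def:diffopt} coincides with the value vector $(\xbf,\ybf)$ itself, because each pair contributes exactly one basic variable (value $1$, status $1$) and one non-basic variable at its lower bound (value $0$, status $0$), and there are no finite upper bounds so status $2$ never occurs. You spell out the one-per-pair basis structure and do the coordinatewise bookkeeping explicitly, while the paper simply asserts $\vbf=(\xbf,\ybf)$ as "easy to verify" and concludes directly, but the underlying argument is the same.
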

\begin{proof}
    For any vertex $(\xbf,\ybf) \in \{0, 1\}^{2n}$, note the basic variable 1 and non-basic variable 0 and we can get the vector $\vbf\in\{0,1\}^{2n}$. It is easy to verify that $\vbf = (\xbf, \ybf)$. Thus, $\text{\# DiffOpt} = \|\vbf - \vbf^*\|_1 = \|(\xbf,\ybf) - (\xbf^*,\ybf^*)\|_1$.
\end{proof}

Combining Lemma \ref{lemma:diffopt-box} and the algebraic structure of vertices of $\Ccal_n$, we have Corollary \ref{col:diffopt-2n} and \ref{col:diffopt-adj-2}. Then we can derive an upper bound of $n$ for the length of all paths with monotone \# DiffOpt in Theorem \ref{thm:path-cube-n}.
\begin{corollary} \label{col:diffopt-2n}
    For $\Ccal_n$ with any optimal basis $\Bbf^*$, \# DiffOpt of each vertex is upper bounded by $2n$.
\end{corollary}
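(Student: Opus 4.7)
The plan is to chain Lemma \ref{lemma:diffopt-box} with the very restricted algebraic form of vertices of $\Ccal_n$. By Lemma \ref{lemma:diffopt-box}, for any vertex $(\xbf,\ybf)$ and the fixed optimal vertex $(\xbf^*,\ybf^*)$ we have $\text{\# DiffOpt} = \|(\xbf,\ybf)-(\xbf^*,\ybf^*)\|_1$, so the corollary reduces to a coordinate-wise comparison of two vertices of the standard cube.

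First I would recall from the discussion preceding the lemma that every vertex of $\Ccal_n$ lies in $\{0,1\}^{2n}$, since each vertex satisfies $\xbf\in\{0,1\}^n$ and $\ybf=\onebf-\xbf\in\{0,1\}^n$. Thus each of the $2n$ coordinates of $(\xbf,\ybf)-(\xbf^*,\ybf^*)$ is an element of $\{-1,0,1\}$, so its absolute value is at most $1$. Summing the $2n$ absolute values yields $\|(\xbf,\ybf)-(\xbf^*,\ybf^*)\|_1\le 2n$, which together with Lemma \ref{lemma:diffopt-box} gives the claim.

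There is really no obstacle here; the corollary is a direct bookkeeping consequence of Lemma \ref{lemma:diffopt-box} and the $\{0,1\}$ structure of the cube's vertices. If one wishes to tighten the bound (which is not needed for Theorem \ref{thm:path-cube-n}), one may observe that $\xbf+\ybf=\onebf$ forces $\ybf-\ybf^*=-(\xbf-\xbf^*)$, so the $1$-norm actually equals $2\|\xbf-\xbf^*\|_1$, which is always even and in particular bounded by $2n$; but I would keep the proof at the level of the coarse $2n$ bound since that is what is invoked later.
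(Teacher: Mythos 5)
Your proof is correct and follows the same route as the paper: invoke Lemma \ref{lemma:diffopt-box} to reduce \# DiffOpt to a $1$-norm of a difference of vertices, then bound each of the $2n$ coordinate differences by $1$ using the $\{0,1\}$ structure of $\Ccal_n$'s vertices. The optional observation that the bound is actually $2\|\xbf-\xbf^*\|_1$ (hence always even) is a nice refinement not present in the paper, but the core argument is identical.
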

\begin{proof}
    Since the elements in each vertex of $\Ccal_n$ are either 0 or 1, from Lemma \ref{lemma:diffopt-box} we have
    $$\text{\# DiffOpt} = \sum_{i=1}^n |\xbf_i - \xbf^*_i| + \sum_{i=1}^n |\ybf_i - \ybf^*_i| \leq \sum_{i=1}^n |1 - 0| + \sum_{i=1}^n |1 - 0| = 2n.$$
\end{proof}

\begin{corollary} \label{col:diffopt-adj-2}
    For each pair of adjacent vertices in $\Ccal_n$, the difference in \# DiffOpt is 2.
\end{corollary}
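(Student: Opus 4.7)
My plan is to combine Lemma \ref{lemma:diffopt-box} with the explicit characterization of edges of $\Ccal_n$. By the lemma, \# DiffOpt at a vertex $(\xbf,\ybf)$ equals $\|(\xbf,\ybf) - (\xbf^*,\ybf^*)\|_1$, so computing the change in \# DiffOpt across an edge reduces to computing the change in this $\ell_1$ distance to the optimum.

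The key geometric observation I would establish first is that every vertex of $\Ccal_n$ has the form $(\xbf, \onebf - \xbf)$ with $\xbf \in \{0,1\}^n$, and that because $\xbf + \ybf = \onebf$ forces exactly one of $\xbf_i, \ybf_i$ to be basic (equal to $1$) and the other non-basic (equal to $0$) at each index $i$, two vertices are adjacent in $\Ccal_n$ if and only if they differ in exactly one coordinate index $i$: at that index the pair $(\xbf_i, \ybf_i)$ flips between $(1,0)$ and $(0,1)$, while all other coordinates coincide. This is the step where most of the work lies, but it is essentially the standard description of the combinatorial cube and follows immediately from the fact that one simplex pivot changes the basis in exactly one slot.

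Given this, the computation is routine. Writing out
\[
\text{\# DiffOpt}(\xbf,\ybf) - \text{\# DiffOpt}(\xbf',\ybf') = \bigl(|\xbf_i - \xbf^*_i| + |\ybf_i - \ybf^*_i|\bigr) - \bigl(|\xbf'_i - \xbf^*_i| + |\ybf'_i - \ybf^*_i|\bigr),
\]
all other indices cancel. Since $\xbf^*_i + \ybf^*_i = 1$ and $\{(\xbf_i,\ybf_i),(\xbf'_i,\ybf'_i)\} = \{(1,0),(0,1)\}$, one of the two bracketed quantities equals $0$ and the other equals $2$ regardless of which of the two possibilities $(\xbf^*_i,\ybf^*_i)$ realizes. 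Hence the magnitude of the difference is exactly $2$.

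The main obstacle, if any, is justifying the adjacency characterization cleanly; beyond that the proof is a short case analysis. I would therefore structure the write-up as: (i) invoke Lemma \ref{lemma:diffopt-box}; (ii) describe adjacent vertices as single-index flips of $(1,0) \leftrightarrow (0,1)$; (iii) reduce the difference in \# DiffOpt to the single index where the two vertices disagree; (iv) conclude the value is $2$ by the two-case check on $(\xbf^*_i,\ybf^*_i)$.
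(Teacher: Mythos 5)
Your proposal is correct and matches the paper's argument: both invoke Lemma \ref{lemma:diffopt-box} to express \# DiffOpt as an $\ell_1$ distance, isolate the single index $i$ where the adjacent vertices disagree (a $(1,0)\leftrightarrow(0,1)$ flip), and then compute the difference at that index using $\xbf^*_i + \ybf^*_i = 1$ and the $\{0,1\}$ structure. The paper carries out the final step as a chain of algebraic manipulations culminating in $2\,\bigl| |\xbf^1_i - \xbf^*_i| - |\xbf^1_i - (1-\xbf^*_i)|\bigr| = 2$, while you observe directly that exactly one of the two bracketed sums is $0$ and the other is $2$; these are equivalent.
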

\begin{proof}
    Notice that adjacent vertices $\xbf^1$ and $\xbf^2$ of the basic cube $[0, 1]^n$ will only differ in one dimension. For the corresponding pair of adjacent vertices $(\xbf^1,\ybf^1)$ and $(\xbf^2,\ybf^2)$ of $\Ccal_n$, they will have two different components since both of them satisfy $\xbf + \ybf = \onebf$, say $\xbf_i$ and $\ybf_i$. Therefore, we have $\xbf^1_i = \ybf^2_i, \xbf^1_i + \xbf^2_i = 1$, and the difference in \# DiffOpt is    
    \begin{equation*}
        \begin{aligned}
            & \left| \text{\# DiffOpt of $(\xbf^1,\ybf^1)$} - \text{\# DiffOpt of $(\xbf^2,\ybf^2)$} \right| \\
            =\ & \left| (|\xbf^1_i - \xbf^*_i| + |\ybf^1_i - \ybf^*_i|) - (|\xbf^2_i - \xbf^*_i| + |\ybf^2_i - \ybf^*_i|) \right| \\
            =\ & \left| (|\xbf^1_i - \xbf^*_i| + |1-\xbf^1_i - \ybf^*_i|) - (|1-\xbf^1_i - \xbf^*_i| + |\xbf^1_i - \ybf^*_i|) \right| \\
            =\ & \left| (|\xbf^1_i - \xbf^*_i| + |\xbf^1_i - \xbf^*_i|) - (|\xbf^1_i - \ybf^*_i| + |\xbf^1_i - \ybf^*_i|) \right| \\
            =\ & 2\left| |\xbf^1_i - \xbf^*_i| - |\xbf^1_i - \ybf^*_i| \right| \\
            =\ & 2\left| |\xbf^1_i - \xbf^*_i| - |\xbf^1_i - (1 - \xbf^*_i)| \right| = 2.
        \end{aligned}
    \end{equation*}
    The last equation is because $\xbf^1_i, \xbf^*_i \in \{0,1\}$.
\end{proof}

\begin{theorem} \label{thm:path-cube-n}
    For $\Ccal_n$ with initial point $(\xbf^0,\ybf^0)$ and any optimal basis $\Bbf^*$, the length of the path with monotone \# DiffOpt is $\frac{\text{\# DiffOpt of $(\xbf^0,\ybf^0)$}}{2}$, which is bounded by $n$ from above.
\end{theorem}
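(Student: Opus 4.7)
The plan is to combine Property~\ref{prop:mono} with Corollaries~\ref{col:diffopt-2n} and~\ref{col:diffopt-adj-2} in a very direct way. Property~\ref{prop:mono} tells us that along any path produced by the smart pivot experts, \# DiffOpt is monotonically non-increasing before optimality is reached, while Corollary~\ref{col:diffopt-adj-2} says that whenever the method moves between two adjacent vertices on $\Ccal_n$, the value of \# DiffOpt changes by exactly $2$ in absolute value. The only way to reconcile ``decreasing'' with ``changes by exactly $2$'' is that every single pivot drops \# DiffOpt by exactly $2$, so the step count is rigidly determined.

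First I would fix an optimal basis $\Bbf^*$, write down the sequence of vertices $(\xbf^0, \ybf^0), (\xbf^1, \ybf^1), \ldots, (\xbf^K, \ybf^K)$ that the expert produces, and let $d_k$ denote \# DiffOpt at the $k$th vertex. By Property~\ref{prop:mono}, $d_{k+1} \leq d_k$, and by Corollary~\ref{col:diffopt-adj-2}, $|d_{k+1} - d_k| = 2$, so $d_{k+1} = d_k - 2$ for every $k$ in the path. Thus $d_k = d_0 - 2k$.

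Next I would identify the terminal value. Since each vertex of $\Ccal_n$ is non-degenerate, the vertex at which the expert terminates is the (unique) optimum associated with $\Bbf^*$, so $d_K = 0$ by Lemma~\ref{lemma:diffopt-box}. Setting $d_0 - 2K = 0$ yields $K = d_0/2$, which is exactly the claimed formula for the path length. Finally, applying Corollary~\ref{col:diffopt-2n} gives $d_0 \leq 2n$ and hence $K \leq n$.

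The only place that could require care is verifying that the path genuinely ends at a vertex with $d_K = 0$ rather than at some suboptimal vertex where the monotone decrease stalls. This follows from Theorem~\ref{thm:idx-in}: as long as the current objective is not optimal, the expert can always strictly decrease \# DiffOpt by choosing an entering variable from $\Bbf^*$, so the process cannot halt before reaching an optimal basis. Because KM cubes are non-degenerate and the optimum is unique, this optimal basis is $\Bbf^*$ itself, closing the argument. No other step is delicate; the whole theorem is essentially an arithmetic consequence of the three preceding lemmas once the strict decrease by $2$ at every pivot is established.
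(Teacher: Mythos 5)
Your proposal is correct and takes essentially the same approach as the paper's proof: a monotone-\# DiffOpt path must decrease by exactly $2$ per pivot (Corollary~\ref{col:diffopt-adj-2}), terminates only when \# DiffOpt reaches zero, and thus has length $\tfrac{d_0}{2} \leq n$ by Corollary~\ref{col:diffopt-2n}. You spell out the terminal condition explicitly via Lemma~\ref{lemma:diffopt-box} and Theorem~\ref{thm:idx-in}, which the paper's terse proof leaves implicit, but the core argument is the same.
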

\begin{proof}
    From Corollary \ref{col:diffopt-adj-2}, we know that a path with monotone \# DiffOpt will have a decrease of 2 in \# DiffOpt at every pivot. Thus, the length is $\frac{\text{\# DiffOpt of $(\xbf^0,\ybf^0)$}}{2} \leq n$. The inequality comes from Corollary \ref{col:diffopt-2n}.
\end{proof}

After analyzing the paths on standard cubes, we turn to our pivot experts and paths on some more general polytopes which cover the KM cube cases and have Theorem \ref{thm:path-KM-n} and \ref{thm:exp-KM-n}.
\begin{theorem} \label{thm:path-KM-n}
    For any polytope combinatorially equivalent to $\Ccal_n$ with non-degenerate vertices, initial point $(\xbf^0,\ybf^0)$, and any optimal basis $\Bbf^*$, the length of path with monotone \# DiffOpt is $\frac{\text{\# DiffOpt of $(\xbf^0,\ybf^0)$}}{2}$, which is bounded by $n$ from above.
\end{theorem}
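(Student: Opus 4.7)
The plan is to reduce Theorem \ref{thm:path-KM-n} to Theorem \ref{thm:path-cube-n} by transferring the key combinatorial facts through the assumed equivalence. Denote the polytope by $P$ and let $\phi: P \to \Ccal_n$ be the face-lattice isomorphism. Since $\phi$ sends vertices to vertices and edges to edges, and non-degeneracy forces each vertex of $P$ to have exactly $n$ basic variables, every pivot along an edge of $P$ corresponds uniquely to an edge traversal in $\Ccal_n$, which by the $n$-cube structure flips precisely one coordinate of $\Ccal_n$'s natural $\{0,1\}^n$ vertex indexing.

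Next I would lift the arguments from Corollaries \ref{col:diffopt-2n} and \ref{col:diffopt-adj-2} to $P$. Because the polytopes of interest (in particular the KM variants) carry only lower bounds, the status vector of each vertex of $P$ lies in $\{0,1\}^{2n}$, so \# DiffOpt is bounded by $2n$. The combinatorial equivalence induces a natural pairing of $P$'s $2n$ variables into $n$ complementary pairs mirroring the pairs $(\xbf_i,\ybf_i)$ of $\Ccal_n$: at every vertex exactly one variable per pair is basic, and pivoting along an edge swaps this choice for exactly one pair. Thus across any edge of $P$ the status vector changes in exactly two paired coordinates, the paired values flip in opposite directions, and the optimal status of the pair is itself complementary; a direct computation then gives that the difference in \# DiffOpt between adjacent vertices is exactly $2$, which is the analog of Corollary \ref{col:diffopt-adj-2}.

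Combining the two facts, any monotone-\# DiffOpt path on $P$ decreases \# DiffOpt by exactly $2$ per pivot and terminates at the optimum, yielding length $\frac{\text{\# DiffOpt of }(\xbf^0,\ybf^0)}{2} \le n$, which is exactly the statement of the theorem.

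The main obstacle is formalizing the variable-pairing step, because combinatorial equivalence a priori only supplies a face-lattice isomorphism. I would discharge it by tracking facet incidence through $\phi$: the $2n$ facets of $\Ccal_n$ split into $n$ antipodal pairs $\{\xbf_i=0\}$ and $\{\ybf_i=0\}$, and non-degeneracy of $P$ forces each of its $2n$ facets to be cut out by the vanishing of a single variable, so $\phi$ pulls back the antipodal pairing to a pairing of $P$'s variables. Once this pairing is in place, the "$\pm 2$, never $0$" behavior of \# DiffOpt across an edge follows verbatim from the proof of Corollary \ref{col:diffopt-adj-2}, and the rest of the argument is routine.
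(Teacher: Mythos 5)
Your proof follows the same high-level strategy as the paper's (transfer the cube argument to $P$ via the face-lattice isomorphism, using non-degeneracy to make \# DiffOpt well-defined per vertex), but you supply substantially more of the actual work. The paper's proof is essentially two sentences and stops at ``we can obtain similar results as in Theorem~\ref{thm:path-cube-n}''; it never explains why the $\pm 2$ jump in \# DiffOpt across an edge (Corollary~\ref{col:diffopt-adj-2}) survives the transfer to $P$, where the explicit $\xbf_i + \ybf_i = 1$ relation is no longer available. Your identification of the complementary variable pairing on $P$ by pulling back the antipodal facet pairs $\{\xbf_i=0\}$, $\{\ybf_i=0\}$ through the isomorphism is precisely the missing ingredient, and it is correct: combinatorial equivalence preserves the incidence fact that every vertex lies on exactly one facet of each pair, and non-degeneracy ties each facet of $P$ to a single variable, so each pivot swaps basic/non-basic status within exactly one pair and the Corollary~\ref{col:diffopt-adj-2} computation goes through verbatim. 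So this is the same route made rigorous rather than a genuinely different argument.

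One small point worth flagging for precision: your claim that status vectors lie in $\{0,1\}^{2n}$ tacitly assumes all $2n$ facets of $P$ are lower-bound facets (no variable ever sits at an upper bound), which holds for the KM variants in Examples~\ref{example:KM} and~\ref{example:AC} but is not literally forced by ``combinatorially equivalent to $\Ccal_n$.'' You note this yourself (``the polytopes of interest \ldots carry only lower bounds''), and the paper implicitly makes the same restriction by writing the initial point as $(\xbf^0,\ybf^0)$, so this is a scoping caveat rather than a gap.
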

\begin{proof}
    For any polytope combinatorially equivalent to $\Ccal_n$, there is a one-to-one correspondence between faces of the polytope and $\Ccal_n$, and the assumption of non-degeneracy guarantees that each vertex of the polytope has a unique \# DiffOpt. Therefore, we can obtain similar results as in Theorem \ref{thm:path-cube-n}.
\end{proof}

\begin{theorem} \label{thm:exp-KM-n}
    For any polytope combinatorially equivalent to $\Ccal_n$ with non-degenerate vertices, initial point $(\xbf^0,\ybf^0)$, and the single optimal basis $\Bbf^*$, the length of a path generated by our pivot experts is upper bounded by $\frac{\text{\# DiffOpt of $(\xbf^0,\ybf^0)$}}{2}$, which is bounded by $n$ from above.
\end{theorem}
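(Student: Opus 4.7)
The plan is to derive Theorem \ref{thm:exp-KM-n} as a direct composition of Property \ref{prop:mono} and Theorem \ref{thm:path-KM-n}. The key observation is that both Expert I and Expert II always select the entering variable from among the variables in the optimal basis $\Bbf^*$; Theorem \ref{thm:idx-in} guarantees that such a candidate exists whenever the current objective value is not optimal. Consequently the hypothesis of Property \ref{prop:mono} is satisfied at every iteration of either expert, and the sequence of \# DiffOpt values along the expert-generated path is monotonically non-increasing.

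Once this monotonicity is in hand, the bound follows directly from Theorem \ref{thm:path-KM-n}: any path on a polytope combinatorially equivalent to $\Ccal_n$ with non-degenerate vertices along which \# DiffOpt is monotone has length exactly $\frac{\text{\# DiffOpt of }(\xbf^0,\ybf^0)}{2}$, which by Corollary \ref{col:diffopt-2n} (extended to equivalent polytopes just as in the proof of Theorem \ref{thm:path-KM-n}) is bounded above by $n$. So no new combinatorial argument is needed; the work has already been done in the three preceding statements.

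The only subtle point worth verifying, rather than a real obstacle, is that the experts' stopping criterion (optimality of the current objective) really is reached at the unique vertex where \# DiffOpt $=0$, so that Theorem \ref{thm:path-KM-n}'s bound captures the full expert path rather than a proper prefix. This is exactly where the ``single optimal basis $\Bbf^*$'' assumption enters: combined with non-degeneracy, it forces the optimal face to be a single vertex, and hence the path cannot stop at any vertex with positive \# DiffOpt. Until it stops, Theorem \ref{thm:idx-in} keeps furnishing an admissible expert move, and by Corollary \ref{col:diffopt-adj-2} combined with Property \ref{prop:mono} each such move drops \# DiffOpt by exactly $2$. Thus the entire expert-generated path satisfies the hypotheses of Theorem \ref{thm:path-KM-n} and inherits its bound.
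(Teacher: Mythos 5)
Your proposal matches the paper's proof: both invoke Theorem~\ref{thm:idx-in} (via the single-optimal-basis assumption) to guarantee the experts always have an admissible entering variable, then apply Property~\ref{prop:mono} to get monotone \# DiffOpt along the expert path, and finally quote Theorem~\ref{thm:path-KM-n} for the bound. Your elaboration of the ``subtle point'' is precisely the role the paper assigns to the single-optimal-basis hypothesis, so the argument is the same, just spelled out in more detail.
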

\begin{proof}
    The assumption of the single optimal basis guarantees our pivot experts to find the entering variable at each iteration. Otherwise, there may not exist such a variable in Theorem \ref{thm:idx-in} if we have found an optimal solution but not an optimal basis. Since our experts' pivot path has monotone \# DiffOpt, we can obtain the same linear upper bound as in Theorem \ref{thm:path-KM-n}.
\end{proof}

Theorem \ref{thm:exp-KM-n} illustrates the value of global information. With the guidance of the given optimal basis, our experts will avoid being led to the worst by misleading local information on various KM variants. Notice that the upper bound holds with arbitrary local information including reduced cost, steepest-edge score, or even random selection. In this aspect, the monotone \# DiffOpt is more like a combinatorial property rather than an algebraic property as the monotone objective value. The strong performance of our experts on KM variants will not be negatively impacted by scaling, which is different from most other classical pivot rules.

\section{Learning as a Pivot Expert}\label{sec:learn}

Despite the superiority of our pivot experts, their requirement for the optimal basis seems a critical defect preventing us from direct application. In this section, we proceed with removing the need for that additional global information via machine learning. To be more specific, we encode the LP as a bipartite graph and build a GCNN model to imitate the pivot expert's choices, which is similar to Gasse's GCNN \cite{gasse2019exact}.

\paragraph{State encoding and input features}
The primal simplex method can be viewed as a Markov decision process, as shown in Figure \ref{fig:simplex}. At the $k$th iteration, the state $\sbf_k$ contains the LP instance and the current basic feasible solution $\xbf^k$. The action space $\Acal(\sbf_k)$ includes all directed edges that can improve the objective value. An action $\abf_k$ is selected according to a certain pivot rule and $\xbf^k$ will move along $\abf_k$ until reaching the next vertex $\xbf^{k+1}$.
\begin{figure}[!ht]
    \centering
    \includegraphics[scale=0.4]{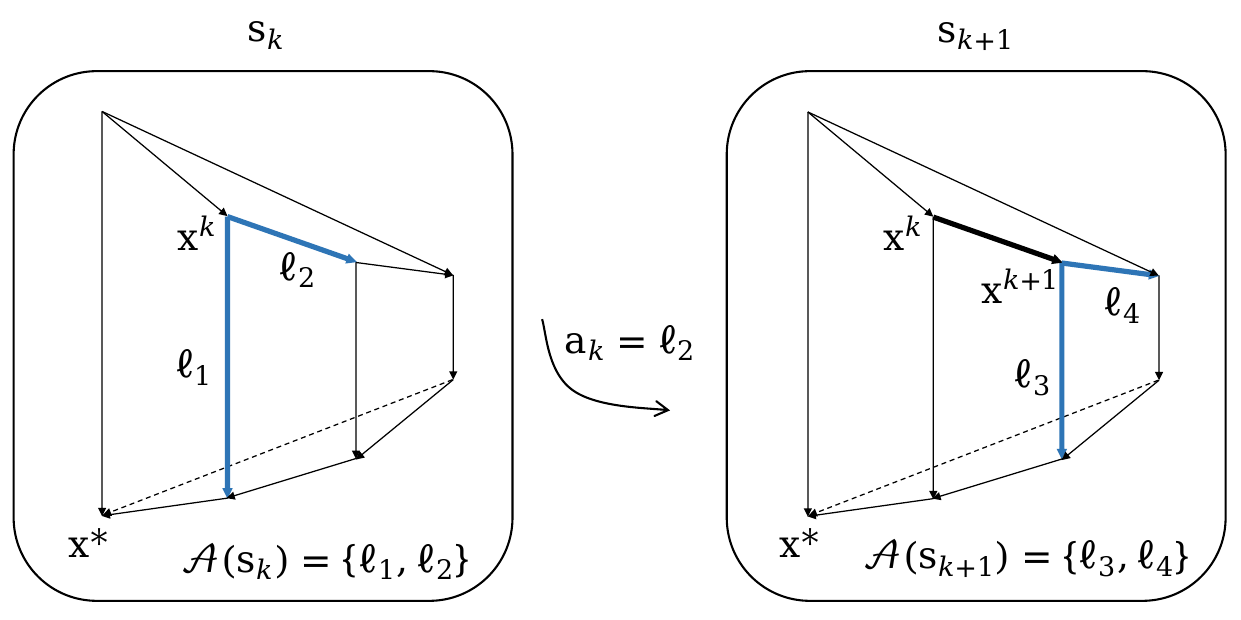}
    \caption{The primal simplex method can be viewed as a Markov decision process. The LP is depicted as a polyhedron with directions that can improve objective value marked on edges. $\xbf^*$ denotes the optimal solution.}
    \label{fig:simplex}
\end{figure}

We encode the state of LP and current solution into a bipartite graph, see Figure \ref{fig:bigraph}.
\begin{figure}[!ht]
    \centering
    \includegraphics[scale=0.6]{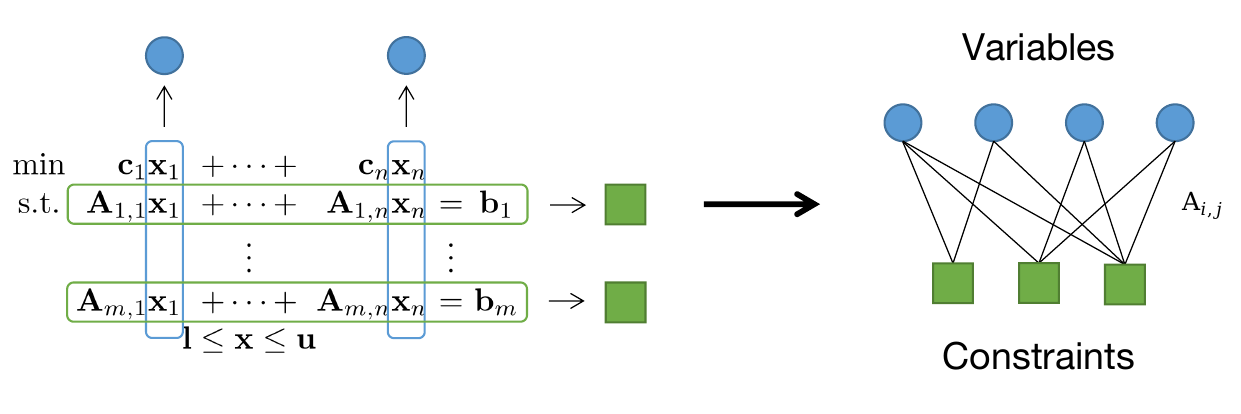}
    \caption{Bipartite graph encoding for LP.}
    \label{fig:bigraph}
\end{figure}
Variables and constraints form two classes of nodes which will be linked by an edge if the corresponding coefficient $\Abf_{i,j}$ is non-zero. Each node and edge carry some features which are listed in Appendix \ref{apx:feat} (Table \ref{tab:feat}). The features are picked for pivot decision and thus slightly differ from those defined by Gasse.

\paragraph{Policy for imitating experts}
We input the bipartite graph to a GCNN model, after which follows a filter and a softmax function. The filter selects all candidates to enter the basis and the softmax function provides their probability to enter the basis. Our GCNN model shares a similar structure with Gasse's but has slightly wider and deeper networks.

It is worth highlighting the differences between learning to pivot for LP and learning to branch for MIP although they both have to select variables. Branching makes choices of variables on leaf nodes of the BnB tree while pivoting runs on a single path. When selecting a variable, all fractional variables with the integer constraints at each leaf node are potential candidates according to the node selection strategy, but the primal simplex method only allows variables that can improve the objective value to enter the basis. Considering the single path and candidate restriction, a bad pivot choice may be far more uncontrollable than a bad branching choice.

\paragraph{Summary}
The pivot expert designing and learning framework can be summarized in Figure \ref{fig:framework}. For a class of LP instances, which are expected to share common features in polyhedra, we have designed two pivot experts that can generate shorter paths. In order to clone the experts using imitation learning, we collect data pairs of encoded LP instances and labels containing bipartite graphs and corresponding experts' pivot choices. With these data, the policy network is trained to replicate the experts' behavior accurately. The high accuracy will help the learned pivot rule correct directions even if a few wrong steps are taken previously.
\begin{figure}[!ht]
    \centering
    \subcaptionbox{A class of LP instances.}{
        \includegraphics[scale=0.35]{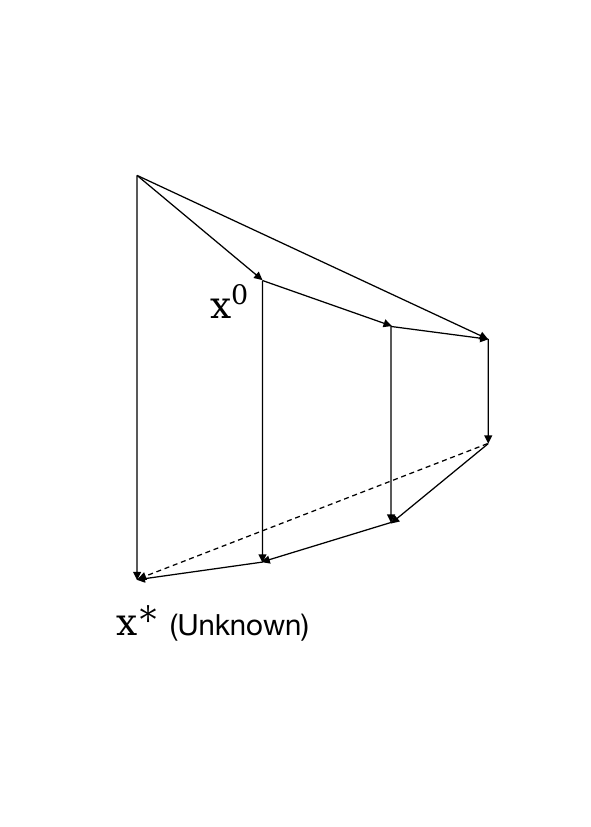}
    }
    \hspace{1em}
    \subcaptionbox{Given the optimal basis, pivot experts can generate shorter paths.}{
        \includegraphics[scale=0.35]{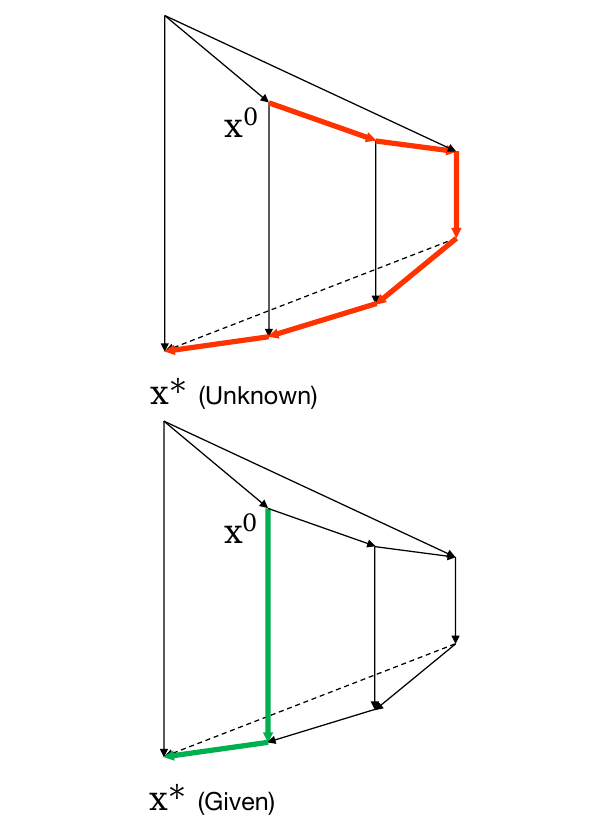}
    }
    \hspace{1em}
    \subcaptionbox{Collect expert choices for imitation.}{
        \includegraphics[scale=0.35]{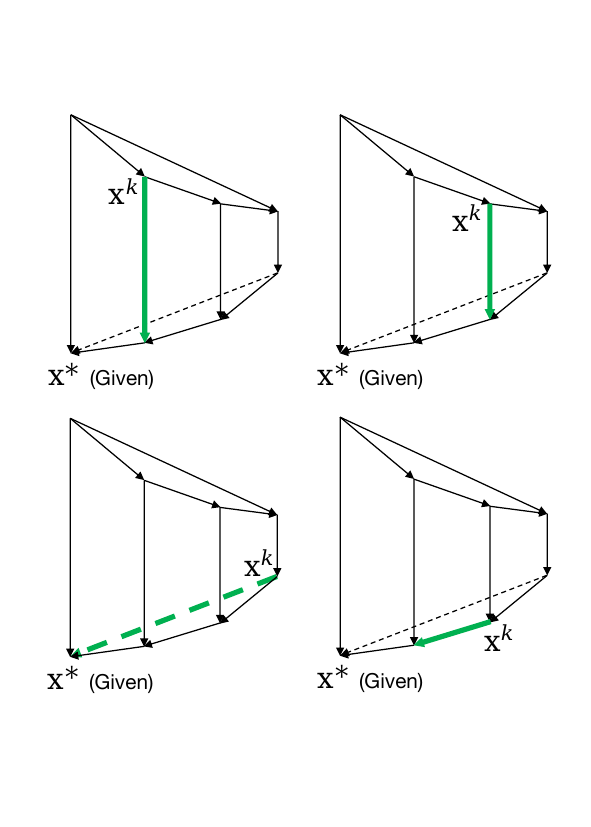}
    }
    \hspace{1em}
    \subcaptionbox{Learned pivot rule gains improvement on new LP instances.}{
        \includegraphics[scale=0.35]{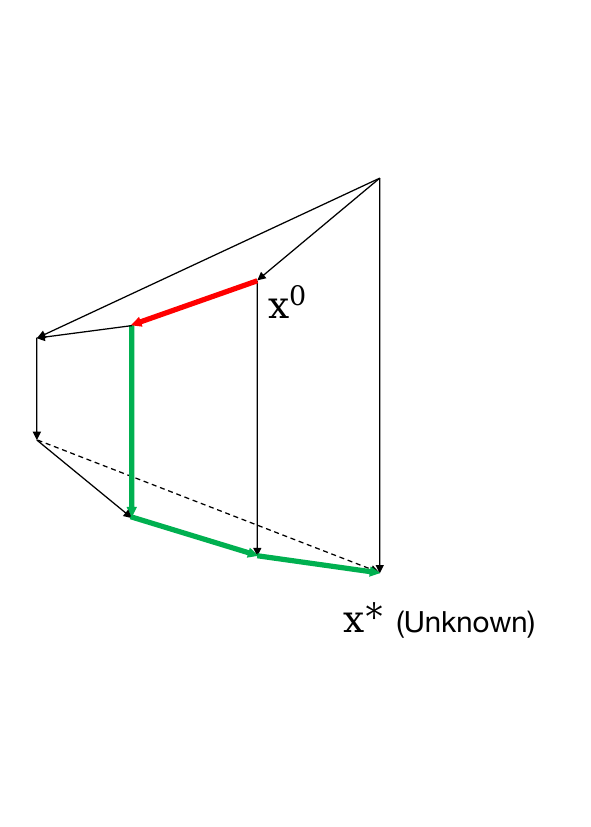}
    }
    \caption{Pivot expert designing and learning framework in the order of (a) \textrightarrow (b) \textrightarrow (c) \textrightarrow (d). Bold {\color{darkgreen}green} lines are expert choices while bold {\color{red}red} lines are bad choices.}
    \label{fig:framework}
\end{figure}

\section{Experiments}\label{sec:experiments}

Our experiments are twofold. In Section \ref{subsec:exp}, we contrast our advanced pivot experts against others, demonstrating their ability to produce shorter pivot paths. Section \ref{subsec:learn} showcases the commendable performance of the learned rule, affirming the feasibility of training our expert rules via imitation learning without sacrificing overall enhancement. All tests are conducted on an AMD Ryzen 7 5700X CPU and NVIDIA RTX 3060 12GB GPU.

\paragraph{Pivot rules to be compared}
Table \ref{tab:pivotring-rules} enumerates the pivot rules under comparison. We adopt five conventional pivot rules (Bland's, Dantzig's, SE, GI, and LD) as our benchmarks. Our primary focus is on two expert rules (EXP and EXP-II), alongside our learned rule (EXP-LEARN). In all experiments, every pivot rule receives a consistent initial basis from Phase I, resolved by SE. Notably, EXP and EXP-II, requiring an additional optimal basis, are provided in Phase II by SE. EXP-LEARN operates without the extra information that classical rules might need. NO-LOCAL, a derivative of our pivot expert, omits local information and serves for ablation analysis.

\begin{table}[!ht]
    \centering
    \begin{tabular}{c|c|c}
        \toprule
        Type & Notation & Pivot rule \\
        \midrule
        \multirow{5}{*}{Classical rules} & Bland & Bland's rule \\
         & Dantzig & Dantzig's rule \\
         & SE & Steepest-edge rule \\
         & GI & Greatest improvement rule \\
         & LD & Largest distance rule \\
        \hline
        \multirow{3}{*}{Our experts} & EXP & Expert I \\
         & EXP-II & Expert II \\
         & EXP-LEARN & Rule imitating Expert I \\
        \hline
        Ablation study & NO-LOCAL & Expert without local information \\
        \bottomrule
    \end{tabular}
    \caption{Pivot rules to be compared.}
    \label{tab:pivotring-rules}
\end{table}

\paragraph{Benchmarks}
We have chosen a diverse set of LP problem tests, including a NETLIB subset \cite{gay1985electronic} and LP relaxations from four combinatorial optimization (CO) classes. NETLIB, a standard LP benchmark, offers varied LP instances in both scale and structure. Our CO classes cover set covering (SC), combinatorial auction (CA), capacitated facility location (FL), and maximum independent set (IS). These CO problems, inspired by \cite{gasse2019exact}, may differ in scale. We presolve NETLIB instances with Gurobi 10.0.2 and CO instances with SCIP 8.0.3.

\paragraph{Evaluation}
We evaluate pivot rule performance primarily using pivot path length, with the geometric mean of pivot lengths serving as our benchmark metric. Two reasons drive this choice. First, while our Python-implemented pivot rules might not mirror modern solvers' speed, they are generally comparable in pivot numbers with Gurobi's primal simplex for many LP instances. Second, pivot path length better captures the simplex method's complexity. Additionally, for thoroughness, we will also report each rule's execution time.

\subsection{Testing Pivot Experts}\label{subsec:exp}
We evaluate our expert rules against classical methods using the NETLIB subset and various optimization benchmarks.

\subsubsection{NETLIB}
\paragraph{Setup}

We utilize 77 selected NETLIB instances, optimized for time and to bypass numerical issues. Each instance adheres to a 300-second time limit. The number of constraints $m$ and variables $n$ for these instances are provided in Table \ref{tab:netlib-scale-exp}.

\begin{table}[!ht]\small
\centering
\begin{tabular}{c|ccc}
\toprule
& \# instance & $m$ & $n$\\
\midrule
NETLIB & 77 & 356.56 ($\pm$ 364.37) & 1439.61 ($\pm$ 2975.50) \\
\bottomrule
\end{tabular}
\caption{Average scales of presolved NETLIB instances.\protect\footnotemark}
\label{tab:netlib-scale-exp}
\end{table}

\footnotetext{Standard deviations are provided in parentheses. This convention is maintained for subsequent tables.}

\paragraph{Numerical results}

The results in Tables \ref{tab:netlib-iter-win} and \ref{tab:netlib-iter-gm} emphasize the efficacy of EXP and EXP-II over classical pivot rules.

\begin{table}[!ht]\small
\centering
\begin{tabular}{ccccc|c}
\toprule
Bland & Dantzig & SE & GI & LD & EXP \\
\midrule
2 & 7 & 16 & 15 & 6 & \textbf{52} \\
\bottomrule
\end{tabular}
\hspace{1em}
\begin{tabular}{ccccc|c}
\toprule
Bland & Dantzig & SE & GI & LD & EXP-II \\
\midrule
2 & 6 & 17 & 12 & 6 & \textbf{54} \\
\bottomrule
\end{tabular}
\caption{Number of wins on the NETLIB subset with a total of 77 instances.\protect\footnotemark}
\label{tab:netlib-iter-win}
\end{table}
\footnotetext{Bolded results indicate the best performance among the evaluated pivot rules. This convention is maintained for subsequent tables.}

Table \ref{tab:netlib-iter-gm} showcases the geometric mean of pivot numbers on the NETLIB subset. It is evident that EXP and EXP-II outperform, while Bland's rule has the worst performance and is thus excluded in future tests.


\begin{table}[!ht]
    \centering
    \begin{tabular}{ccccc|cc|c}
         \toprule
         Bland & Dantzig & SE & GI & LD & EXP & EXP-II & NO-LOCAL\\
         \midrule
         850.48 & 198.15 & 121.28 & 130.57 & 176.51 & \textbf{111.94} & 118.28  & 139.03\\
         \bottomrule
    \end{tabular}
    \caption{Geometric mean of pivot numbers on the NETLIB subset.}
    \label{tab:netlib-iter-gm}
\end{table}

\paragraph{Ablation study: the role of local information in expert rules}

Classical pivot rules, like the SE, GI, Bland's, and Dantzig's rules, largely utilize local information, such as reduced costs. In contrast, our expert rules merge both global (the optimal basis) and local information (the steepest-edge score) to set the pivot direction. Here, we emphasize the pivotal role local information plays in optimizing expert rules.

We introduce a variant of the EXP rule, called NO-LOCAL, that omits local information. In this model, the entering variable is randomly chosen from candidates in the optimal basis. Its efficacy is tested on the NETLIB subset.


\begin{table}[!ht]
    \centering
    \begin{tabular}{ccccc|c|c}
    \toprule
        Bland & Dantzig & SE & GI & LD & EXP & NO-LOCAL \\
    \midrule
        2 & 6 & 16 & 15 & 6 & \textbf{52} & 4\\
    \bottomrule
    \end{tabular}
    \caption{Number of wins on the NETLIB subset with a total of 77 instances.}
    \label{tab:netlib-wins-exp}
\end{table}

\begin{table}[!ht]
    \centering
    \begin{tabular}{ccccc|c|c}
    \toprule
        Bland & Dantzig & SE & GI & LD & EXP-II & NO-LOCAL \\
    \midrule
        2 & 6 & 17 & 12 & 6 & \textbf{54} & 5\\
    \bottomrule
    \end{tabular}
    \caption{Number of wins on the NETLIB subset with a total of 77 instances.}
    \label{tab:netlib-wins-exp-inout}
\end{table}

Table \ref{tab:netlib-iter-gm} and its counterparts reveal that NO-LOCAL underperforms against EXP and EXP-II rules and even lags behind the classical SE rule. The results underscore the diminished efficacy of the EXP rule when local insights are absent, leading to increased pivot numbers and fewer wins. Local information is evidently instrumental in optimizing pivot decisions.

\subsubsection{Combinatorial Optimization Benchmarks} \label{subsubsec:co-bench}
\paragraph{Setup}

Based on the guidelines from \cite{gasse2019exact}, we generate 100 random instances for each combinatorial benchmark. For the set covering problems, instances have 400 columns and 200 rows. Combinatorial auction problems have 100 items and 500 bids. For capacitated facility location problems, instances contain 20 facilities and 15 customers. Finally, maximum independent set problems have 150 nodes with an affinity value set to 2. Table \ref{tab:co-scale-exp} details the scales of these presolved CO instances.

\begin{table}[!ht]\small
    \centering
    \begin{tabular}{c|ccc}
    \toprule
      & \# instance & $m$ & $n$\\
    \midrule
    \textbf{SC} & 100 & 200.00 ($\pm$ 0.00) & 400.00 ($\pm$ 0.00) \\
    \textbf{CA} & 100 & 181.46 ($\pm$ 4.78) & 425.09 ($\pm$ 16.62) \\
    \textbf{FL} & 100 & 336.00 ($\pm$ 0.00) & 315.00 ($\pm$ 0.00) \\
    \textbf{IS} & 100 & 290.18 ($\pm$ 2.12) & 150.00 ($\pm$ 0.00) \\
    \bottomrule
    \end{tabular}
    \caption{Average scales of presolved CO instances.}
    \label{tab:co-scale-exp}
\end{table}

\paragraph{Numerical results}

\begin{table}[!ht]
    \centering
    \begin{tabular}{c|cccc|cc}
    \toprule
              & Dantzig & SE     & GI      & LD     & EXP    & EXP-II  \\
    \midrule
    \textbf{SC}  & 2093.30  & 417.71 & 867.47 & 456.15 & \textbf{267.66} & 282.03     \\ \hline
    \textbf{CA} & 856.35 & 265.74 & 3356.4 & 278.86 & 115.02 & \textbf{113.06}     \\ \hline
    \textbf{FL}    & 461.81  & 256.50 & 311.85  & 412.02 & 242.32 & \textbf{235.19}     \\ \hline
    \textbf{IS}    & 334.35 & 114.03 & 345.61 & 114.03 & 113.14 & \textbf{113.02}     \\
    \bottomrule
    \end{tabular}
    \caption{Geometric mean of pivot numbers on the CO problem sets.}
    \label{tab:learn-train-co-iter}
\end{table}

Table \ref{tab:learn-train-co-iter} reveals that among classical pivot rules, the SE rule performs best. However, our designed rules, EXP and EXP-II, surpass all classical rules in performance, indicating the potential of these newer strategies. Specifically, EXP excels on the SC benchmark, while the EXP-II rule consistently outshines all classical methods on the CA, FL, and IS benchmarks. It is worth noting that Dantzig's rule often lags behind, and the GI rule particularly struggles with the CA benchmark.

\begin{table}[!ht]
    \centering
    \begin{tabular}{c|cccc|cc}
    \toprule
              & Dantzig & SE     & GI      & LD     & EXP    & EXP-II  \\
    \midrule
    \textbf{SC}  & 0 & 2 & 0 & 0 & \textbf{63} & 37    \\
    \hline
    \textbf{CA} & 0 & 0	& 0 & 0	& 27 & \textbf{77}    \\
    \hline
    \textbf{FL}    & 0 & 14 & 5  & 3 & 40 & \textbf{42}    \\
    \hline
    \textbf{IS}    & 0 & 39 & 0 & 39 & 91 & \textbf{100}    \\
    \bottomrule
    \end{tabular}
    \caption{Number of wins on the CO problem sets with a total of 100 instances.}
    \label{tab:learn-train-co-iter-win-nolearn}
\end{table}

As for the number of wins in Table \ref{tab:learn-train-co-iter-win-nolearn}, our designed rules outperform the classical rules by a large margin. Besides, the noteworthy consistency of EXP and EXP-II across benchmarks, as similarly showcased in Table \ref{tab:learn-train-co-iter}, underscores their robustness. This uniform performance indicates that their dominance is not merely attributed to specific outlier instances but represents wider applicability and efficacy.

\begin{table}[!ht]
    \centering
    \begin{tabular}{c|cccc|cc}
    \toprule
              & Dantzig & SE     & GI      & LD     & EXP    & EXP-II  \\
    \midrule
    \textbf{SC}  & 1.243 & 0.313 & 5.374 & 0.332 & \textbf{0.200} & 0.683     \\
    \hline
    \textbf{CA} & 0.504 & 0.202 & 23.93 & 0.207 & \textbf{0.085} & 0.185      \\
    \hline
    \textbf{FL}    & 0.473 & 0.317 & 1.434  & 0.463 & \textbf{0.299} & 0.769     \\
    \hline
    \textbf{IS}    & 0.295 & 0.119 & 0.719  & \textbf{0.117} & 0.127 & 0.282     \\
    \bottomrule
    \end{tabular}
    \caption{Geometric mean of solving time (in seconds) on the CO problem sets.}
    \label{tab:learn-train-co-time}
\end{table}

Table \ref{tab:learn-train-co-time} showcases the efficacy of EXP and EXP-II over classical pivot rules regarding time. Notably, EXP stands out with the shortest average time across all datasets, highlighting its speedy convergence. Though the EXP-II rule efficiently minimizes iterations, its computational overhead from the additional tie-breaking step compromises its time efficiency, especially when compared to EXP. Summarily, considering both time and iteration counts, EXP reigns as the prime pivot rule for these problem sets in time efficiency.

While our designed experts have shown promising performance across several benchmarks, their complex and non-classical nature might make them difficult to integrate directly into existing simplex solvers or optimization frameworks. In the following Section \ref{subsec:learn}, we present an imitation learning strategy to address these implementation hurdles.

\subsection{Testing Learned Pivot Rule}\label{subsec:learn}

We conduct experiments with EXP-LEARN on combinatorial optimization benchmarks, employing the imitation learning approach to emulate the EXP rule.

\subsubsection{Combinatorial Optimization Benchmarks} \label{subsubsec:co-learn-bench}

\paragraph{Setup}

The benchmark scale mirrors that of the prior CO experiment (Section \ref{subsubsec:co-bench}). Scales of presolved CO instances are shown in Table \ref{tab:co-scale-learn}.

\begin{table}[!ht]\small
    \centering
    \resizebox{\textwidth}{!}{%
    \begin{tabular}{c|ccc|ccc|ccc}
    \toprule
     & \# train & $m$ & $n$ & \# valid & $m$ & $n$ & \# test & $m$ & $n$ \\
    \midrule
      \textbf{SC}  & 1000 & 200.00 ($\pm$ 0.00) & 400.00 ($\pm$ 0.00) & 200 & 200.00 ($\pm$ 0.00) & 400.00 ($\pm$ 0.00) & 200 & 200.00 ($\pm$ 0.00) & 400.00 ($\pm$ 0.00)  \\
      \textbf{CA} & 1000 & 181.93 ($\pm$ 5.02) & 427.42 ($\pm$ 16.98) & 200 & 181.98 ($\pm$ 5.45) & 428.15 ($\pm$ 18.7) & 200 & 182.48 ($\pm$ 5.53) & 427.76 ($\pm$ 18.56) \\
      \textbf{FL}    & 1000 & 336.00 ($\pm$ 0.00) & 315.00 ($\pm$ 0.00) & 200 & 336.00 ($\pm$ 0.00) & 315.00 ($\pm$ 0.00) & 200 & 336.00 ($\pm$ 0.00) & 315.00 ($\pm$ 0.00)  \\
      \textbf{IS}    & 1000 & 290.20 ($\pm$ 2.37) & 150.00 ($\pm$ 0.00) & 200 & 290.18 ($\pm$ 2.27) & 150.00 ($\pm$ 0.00) & 200 & 290.19 ($\pm$ 2.09) & 150.00 ($\pm$ 0.00) \\
    \bottomrule
    \end{tabular}%
    }
    \caption{Average scales of presolved CO instances for EXP-LEARN to imitate EXP.}
    \label{tab:co-scale-learn}
\end{table}

\paragraph{Training procedure} 
During training, we use 5 unique seeds for data generation and model training. For each CO benchmark, identical hyperparameters are employed, with 50,000 pivot samples sourced from 1,000 instances for training and 10,000 from 200 instances for validation.

\paragraph{Performance metrics}
Performance is assessed using the GCNN model's validation accuracy, detailed in Table \ref{tab:learn-valid-acc}. We rely on Top 1, Top 3, and Top 5 accuracies. To gauge its real-world relevance, the model is tested on 200 fresh instances, emphasizing its generalization across benchmarks. Results can be found in Tables \ref{tab:learn-test-co-iter} to \ref{tab:learn-test-co-time}.

\paragraph{Numerical results}

\begin{table}[!ht]
    \centering
    \begin{tabular}{c|c|c|c}
    \toprule
              & Top 1 Acc & Top 3 Acc & Top 5 Acc \\
    \midrule
    \textbf{SC}  & 0.533 ($\pm$ 0.004)    & 0.847 ($\pm$ 0.005)    & 0.927 ($\pm$ 0.003)    \\ \hline
    \textbf{CA} & 0.362 ($\pm$ 0.004)   & 0.656 ($\pm$ 0.002)    & 0.784 ($\pm$ 0.002)    \\ \hline
    \textbf{FL}    & 0.499 ($\pm$ 0.007)    & 0.776 ($\pm$ 0.007)    & 0.870 ($\pm$ 0.005)    \\ \hline
    \textbf{IS}    & 0.257 ($\pm$ 0.002)   & 0.420 ($\pm$ 0.002)    & 0.511 ($\pm$ 0.001)    \\
    \bottomrule
    \end{tabular}
    \caption{Accuracies on the CO validation sets.}
    \label{tab:learn-valid-acc}
\end{table}

As observed in Table \ref{tab:learn-valid-acc}, our model consistently achieves a Top 1 accuracy over 25\% for all problems. This suggests a greater than 25\% chance of choosing the expert action from potential pivot candidates. Notably, the simplex method's nature in LP means that while Top 3 and Top 5 accuracies are significant, they cannot be directly applied, marking a limitation. However, it needs to be emphasized that comparing validation accuracy across different benchmarks is not meaningful. Moreover, we choose not to include test accuracy, as our primary focus is not on direct accuracy comparison but on the pivotal metric of pivot length. 


\begin{table}[!ht]
    \centering
    \begin{tabular}{c|ccc|c|cc}
    \toprule
              & SE     & GI      & LD     & EXP-LEARN & EXP    & EXP-II  \\
    \midrule
    \textbf{SC}  & 418.51 & 989.53 & 468.42 & \textbf{336.24} ($\pm$ 5.72) & 268.33 & 280.31    \\ \hline
    \textbf{CA} & 265.61 & 1339.67 & 275.88 & \textbf{223.33} ($\pm$ 3.30) & 114.65 & 111.70     \\ \hline
    \textbf{FL}    & 241.74 & 303.48  & 376.80 & \textbf{239.01} ($\pm$ 1.73) & 223.57 & 226.98     \\ \hline
    \textbf{IS}    & 113.52 & 301.79 & 113.52 & \textbf{113.44} ($\pm$ 0.01) & 112.68 & 112.61     \\
    \bottomrule
    \end{tabular}
    \caption{Geometric mean of pivot numbers on the CO test sets.\protect\footnotemark}
    \label{tab:learn-test-co-iter}
\end{table}

\footnotetext{The results for EXP and EXP-II are for reference only as they cannot be directly used in practical applications.}

Table \ref{tab:learn-test-co-iter} shows that EXP-LEARN consistently outshines its competitors, highlighting its superior efficiency. While EXP-LEARN displays great performance among most benchmarks, it exhibits only a slight lead over the SE rule in certain benchmarks, primarily due to the foundational efficiency of the EXP rule it is built upon. This demonstrates that while imitation learning brings benefits, it does not fully bridge the performance gap between SE and EXP.

\begin{table}[!ht]
    \centering
    \begin{tabular}{c|ccc|c}
    \toprule
              & SE     & GI      & LD     & EXP-LEARN \\
    \midrule
    \textbf{SC}  & 8.20 ($\pm$ 3.06) & 0.00 ($\pm$ 0.00) & 0.20 ($\pm$ 0.40) & \textbf{192.00} ($\pm$ 2.68)    \\
    \hline
    \textbf{CA} & 7.00 ($\pm$ 2.00)	& 0.00 ($\pm$ 0.00) & 1.60 ($\pm$ 1.85)	& \textbf{192.40} ($\pm$ 3.01)    \\
    \hline
    \textbf{FL}    & 80.60 ($\pm$ 1.96)	& 17.40 ($\pm$ 2.24) & 17.40 ($\pm$ 2.15)	& \textbf{102.80} ($\pm$ 2.93)    \\
    \hline
    \textbf{IS}    & 170.80 ($\pm$ 1.33)	& 0.00 ($\pm$ 0.00) & 170.80 ($\pm$ 1.33)	& \textbf{186.40} ($\pm$ 1.50)    \\
    \bottomrule
    \end{tabular}
    \caption{Number of wins on the CO test sets with a total of 200 instances.}
    \label{tab:learn-test-co-iter-win}
\end{table}

Table \ref{tab:learn-test-co-iter-win} underscores the consistency of the EXP-LEARN rule. Its dominant performance is not due to a few outliers but is maintained across numerous instances in each benchmark. This indicates a robust and generalizable model, proving EXP-LEARN's reliability in varied scenarios and highlighting its potential for broader applications in optimization tasks.

\begin{table}[!ht]
    \centering
    \begin{tabular}{c|ccc|c|cc}
    \toprule
              & SE    & GI     & LD   & EXP-LEARN & EXP   & EXP-II  \\
    \midrule
    \textbf{SC}  & 0.537 & 6.849 & \textbf{0.317} & 1.359 ($\pm$ 0.022) & 0.317 & 0.827    \\
    \hline
    \textbf{CA} & 0.210 & 8.438 & \textbf{0.202} & 0.806 ($\pm$ 0.011) & 0.083 & 0.178    \\
    \hline
    \textbf{FL}    & \textbf{0.426} & 1.282  & 0.599 & 0.821 ($\pm$ 0.031) & 0.385 & 0.719    \\
    \hline
    \textbf{IS}    & 0.152 & 0.981  & \textbf{0.138} & 0.326 ($\pm$ 0.001) & 0.157 & 0.368    \\
    \bottomrule
    \end{tabular}
    \caption{Average solving time (in seconds) on the CO test sets.}
    \label{tab:learn-test-co-time}
\end{table}

Table \ref{tab:learn-test-co-time} quantifies the pivot efficiency across different benchmarks using varied rules. Notably, EXP-LEARN tends to solve in longer time, which is a byproduct of its intricate approach. Specifically, each pivot under EXP-LEARN necessitates a forward pass of the GCNN model. With a more meticulous design of the graph architecture, the time performance can be further optimized.

\section{Conclusion}\label{sec:conclusion}

The simplex methods are time-honored with rich practical design and mystery complexity. Generating a short path is the key task for pivot rules. In this paper, we focus on primal simplex and design two innovative and smart pivot experts that leverage both global and local information, i.e. optimal basis and steepest-edge score respectively. Experiments illustrate that these two experts overall outperform classical pivot rules significantly. To bridge theory to practical application, we integrate a GCNN model to learn knowledge from these experts. This imitation learning facilitates the circumvention of global information dependencies while preserving the performance in path generation. Empirical evidence confirms the learnability of our experts. Moreover, the learned rule commendably surpasses classical pivot rules in generating shorter pivot paths, although not quite caught up with the experts.

The value of our pivot experts extends beyond their standalone significance, serving both as benchmarks and generators of expert pivot labels. Particularly, the pivot experts outpace predecessors like MCTS in swiftly constructing superior paths, especially Expert I. Our learning paradigm effectively liberates us from the constraints of the simplex tableau. Furthermore, adopting a bipartite graph representation for LP proves more memory-conscious. Transitioning our method to dual simplex or primal-dual simplex, we anticipate, will be seamless with minimal adjustments. Yet, our reliance on GPUs poses challenges to solver implementation, potentially slowing down the simplex process through extended inference time. Generalizability remains a formidable hurdle within the ML4MO community. Addressing these challenges in future endeavors promises a tighter fusion between machine learning and optimization, fostering the advent of innovative ML4MO techniques.

\section*{Acknowledgments}

We thank Qi Huangfu for the fruitful discussions.

This research is partially supported by the National Natural Science Foundation of China (NSFC) [Grant NSFC-72150001, 72225009, 11831002].

\bibliographystyle{unsrt}
\bibliography{templateArxiv.bbl}

\newpage
\appendix
\section{Selected Features}\label{apx:feat}

Features used in our GCNN model are listed in Table \ref{tab:feat}.
\begin{table}[!ht]
    \centering
    \begin{tabular}{cll}
        \toprule
        Tensor                  & Feature                                   & Description                                 \\
        \midrule
        \multirow{4}{*}{Constr} & $\cos\langle \cbf, \abf \rangle$          & Cosine similarity with objective.           \\
                                & $\bbf$                                    & Right hand side.                            \\
                                & $\ybf$                                       & Dual solution value.                        \\
        \hline
        Edge                    & $\Abf_{i,j}$                               & Constraint coefficient.                     \\
        \hline
        \multirow{8}{*}{Var}    & $\cbf$                                    & Objective coefficient.                      \\
                                & has\_lb                                   & Lower bound indicator.                      \\
                                & has\_ub                                   & Upper bound indicator.                      \\
                                & lb                                   & Lower bound.                      \\
                                & ub                                   & Upper bound.                      \\
                                & basis\_status                             & One-hot basis status (basic, lower, upper). \\
                                & $\bar{\cbf}$                              & Reduced cost.                               \\
                                & $\xbf$                                    & Solution value.                             \\
                                & $\frac{\bar{\cbf}_{j}}{\sqrt{\|\Abf_{\Bbf}^{-1}\Abf_{j}\|^2+1}}$ & Score in the steepest-edge rule.               \\
        \bottomrule
    \end{tabular}
    \caption{Features used in GCNN.}\label{tab:feat}
\end{table}

\section{Implementation of the Simplex Solver}\label{apx:solver}

\subsection{Implementation Details}
In our simplex solver, we implement a user-friendly revised primal simplex method. We add tolerance for more robust performance. At the end of Phase I, we will detect and remove redundant constraints, so the LP instance solved in Phase II may be different from the original one. (This will not affect the fairness of our experiments because we provide the same modified LP for all pivot rules.)

We split the primal simplex method into many atomic operations for machine learning models to interact with. The solver allows one to pause and get information at each pivot, switch between existing pivot rules, or even customize his own pivot rule.

We do not implement a practical version of computing like sparse matrices or updating the inverse of the basic matrix, reduced cost, and score in each pivot rule. Therefore, our solver takes much longer time although it solves LP in a comparable pivot number to Gurobi.

\subsection{Capabilities and Limitations}
Our solver is a pure primal simplex prototype that allows one user to set a customized pivot rule. The solver can exactly solve over 80 out of 114 LP instances in NETLIB and all LP relaxation of mild-scale CO instances we generate. Since our matrices are stored in dense format for convenience, it is a pity that we cannot test too large LP with thousands of scales.

For purity of our simplex method, there is no advanced implementation in our solver such as scaling, Harris's ratio test, anti-degeneracy tricks, or other heuristics that can greatly improve the robustness of a simplex solver (see \cite{maros2012computational} for more details). Developing a high-performance simplex solver that supports the embedding or interaction of machine learning models has great significance but is far beyond the scope of this paper.

\section{Detailed Experiments}\label{apx:experiments}

\subsection{Reformulation}
LP instances from various applications share a more general form \eqref{prob:glp} containing both equality and inequality constraints
\begin{equation} \label{prob:glp}
    \begin{aligned}
        \min\  & \cbf^\top \xbf                                                            \\
        \st\   & \Abf_{\text{eq}}\xbf = \bbf_{\text{eq}}                                   \\
               & \lbf_{\text{constr}} \leq \Abf_{\text{ieq}}\xbf \leq \ubf_{\text{constr}} \\
               & \lbf_{\xbf} \leq \xbf \leq \ubf_{\xbf}.
    \end{aligned}
\end{equation}
To reformulate \eqref{prob:glp} as \eqref{prob:lp} for the primal simplex method, we add slacks $\sbf$ in those inequality constraints
\begin{equation}
    \begin{aligned}
        \min\  & \begin{bmatrix}
                     \cbf \\
                     \zerobf
                 \end{bmatrix}^\top \begin{bmatrix}
                                        \xbf \\
                                        \sbf
                                    \end{bmatrix}             \\
        \st\   & \begin{bmatrix}
                     \Abf_{\text{eq}}                 \\
                     -\Abf_{\text{ieq}} & \Ibf        \\
                     \Abf_{\text{ieq}}  &      & \Ibf
                 \end{bmatrix}
        \begin{bmatrix}
            \xbf \\
            \sbf
        \end{bmatrix}
        =
        \begin{bmatrix}
            \bbf_{\text{eq}}       \\
            - \lbf_{\text{constr}} \\
            \ubf_{\text{constr}}
        \end{bmatrix}                    \\
               & \begin{bmatrix}
                     \lbf_{\xbf} \\
                     \zerobf
                 \end{bmatrix}
        \leq
        \begin{bmatrix}
            \xbf \\
            \sbf
        \end{bmatrix}
        \leq
        \begin{bmatrix}
            \ubf_{\xbf} \\
            + \inftybm
        \end{bmatrix}.
    \end{aligned}
\end{equation}
We will remove those constraints and unnecessary slack components in $\sbf$ with an infinite right-hand side.

\subsection{Reproducing the Experiments}
The outcomes of reproducing the experiments could show small variations. Besides the randomness in experiments of NO-LOCAL and training EXP-LEARN, there exist slight differences in solving the linear system on different platforms, which will cause changes in the pivot path. This is a common phenomenon that occurs even with Gurobi's primal simplex method.

To overcome this issue and reasonably validate our approach, we have repeatedly conducted our experiments on different platforms and the results remain the same in general.

\subsection{Detailed Numerical Results}
Detailed numerical results on the NETLIB subset are presented in Table \ref{tab:apx-netlib-iter} and \ref{tab:apx-netlib-time}. Pivot numbers of COPT's dual simplex and Gurobi's primal simplex are presented for reference. The timeout may be caused by inefficient computation, extremely long paths, or cycling.

\paragraph{NETLIB subset}

\begin{center}\small
    \begin{longtable}[!ht]{lcc|cc|c|ccccc|cc}
    \toprule
        Prob	&	$m$	&	$n$	&	COPT	&	GUROBI	&	Phase I	&	Bland	&	Dantzig	&	SE	&	GI	&	LD	&	EXP	&	EXP-II\\
    \midrule
    \endfirsthead

    \toprule
        Prob	&	$m$	&	$n$	&	COPT	&	GUROBI	&	Phase I	&	Bland	&	Dantzig	&	SE	&	GI	&	LD	&	EXP	&	EXP-II\\
    \midrule
    \endhead

    \endfoot


p\_25fv47	&	696	&	1746	&	2794	&	1936	&	1428	&	t	&	7316	&	1136	&	1570	&	4649	&	984	&	834	\\
p\_adlittle	&	53	&	134	&	89	&	103	&	72	&	60	&	33	&	35	&	23	&	41	&	20	&	17	\\
p\_afiro	&	9	&	19	&	3	&	6	&	0	&	5	&	4	&	6	&	5	&	6	&	7	&	7	\\
p\_agg	&	149	&	234	&	82	&	116	&	164	&	77	&	31	&	27	&	27	&	31	&	25	&	24	\\
p\_agg2	&	277	&	471	&	143	&	169	&	0	&	175	&	130	&	138	&	124	&	146	&	124	&	121	\\
p\_agg3	&	277	&	471	&	142	&	170	&	0	&	225	&	144	&	129	&	144	&	143	&	123	&	113	\\
p\_bandm	&	178	&	333	&	257	&	286	&	276	&	1017	&	186	&	143	&	148	&	266	&	104	&	111	\\
p\_beaconfd	&	13	&	35	&	11	&	6	&	13	&	10	&	7	&	8	&	6	&	8	&	6	&	6	\\
p\_blend	&	54	&	89	&	59	&	50	&	64	&	87	&	50	&	39	&	37	&	39	&	31	&	18	\\
p\_bnl1	&	457	&	1336	&	957	&	1402	&	870	&	469	&	308	&	96	&	120	&	299	&	87	&	79	\\
p\_bnl2	&	992	&	2813	&	1133	&	2910	&	1593	&	14317	&	4142	&	974	&	984	&	3684	&	583	&	530	\\
p\_boeing1	&	290	&	655	&	359	&	500	&	475	&	1496	&	495	&	176	&	221	&	491	&	215	&	196	\\
p\_boeing2	&	122	&	261	&	113	&	172	&	167	&	185	&	86	&	43	&	50	&	68	&	37	&	34	\\
p\_bore3d	&	49	&	79	&	45	&	30	&	50	&	25	&	12	&	13	&	17	&	16	&	13	&	11	\\
p\_brandy	&	107	&	207	&	187	&	164	&	199	&	1128	&	131	&	112	&	115	&	191	&	73	&	67	\\
p\_capri	&	184	&	362	&	193	&	206	&	322	&	101	&	68	&	49	&	53	&	42	&	54	&	49	\\
p\_czprob	&	463	&	2446	&	635	&	2200	&	702	&	13060	&	838	&	449	&	620	&	1042	&	376	&	371	\\
p\_degen2	&	380	&	692	&	449	&	711	&	616	&	1974	&	718	&	263	&	287	&	2923	&	309	&	319	\\
p\_degen3	&	1410	&	2510	&	2018	&	4630	&	2517	&	t	&	t	&	923	&	1240	&	t	&	894	&	907	\\
p\_e226	&	143	&	364	&	252	&	273	&	193	&	2510	&	460	&	173	&	297	&	394	&	110	&	101	\\
p\_etamacro	&	316	&	609	&	411	&	545	&	548	&	748	&	253	&	193	&	182	&	331	&	160	&	137	\\
p\_fffff800	&	270	&	774	&	273	&	194	&	350	&	771	&	207	&	165	&	100	&	165	&	115	&	98	\\
p\_finnis	&	296	&	618	&	195	&	290	&	367	&	759	&	308	&	152	&	138	&	185	&	135	&	129	\\
p\_fit1d	&	24	&	1047	&	59	&	2270	&	0	&	t	&	1055	&	635	&	868	&	2686	&	1811	&	t	\\
p\_fit1p	&	627	&	1655	&	969	&	428	&	630	&	t	&	1027	&	467	&	603	&	483	&	394	&	380	\\
p\_fit2d	&	25	&	10387	&	33364	&	30970	&	0	&	t	&	13048	&	5754	&	t	&	60492	&	20376	&	t	\\
p\_forplan	&	93	&	398	&	184	&	179	&	232	&	491	&	103	&	62	&	66	&	62	&	36	&	36	\\
p\_ganges	&	382	&	721	&	402	&	554	&	384	&	603	&	431	&	357	&	397	&	463	&	488	&	433	\\
p\_gfrd-pnc	&	169	&	549	&	182	&	528	&	237	&	628	&	167	&	143	&	151	&	147	&	200	&	205	\\
p\_greenbeb	&	1415	&	3609	&	3182	&	3788	&	2889	&	t	&	5275	&	1623	&	t	&	2643	&	978	&	t	\\
p\_grow15	&	300	&	645	&	450	&	544	&	300	&	3537	&	105	&	105	&	79	&	112	&	147	&	169	\\
p\_grow22	&	440	&	946	&	723	&	886	&	440	&	3670	&	200	&	129	&	100	&	131	&	151	&	290	\\
p\_grow7	&	140	&	301	&	182	&	198	&	140	&	176	&	45	&	46	&	29	&	51	&	59	&	69	\\
p\_israel	&	163	&	304	&	113	&	160	&	172	&	407	&	297	&	110	&	170	&	110	&	78	&	82	\\
p\_kb2	&	37	&	62	&	28	&	41	&	37	&	151	&	48	&	47	&	87	&	51	&	17	&	16	\\
p\_ken-07	&	836	&	2031	&	1043	&	1438	&	1124	&	4411	&	985	&	905	&	984	&	956	&	1035	&	1036	\\
p\_lotfi	&	117	&	329	&	100	&	124	&	127	&	326	&	113	&	70	&	99	&	70	&	59	&	55	\\
p\_nesm	&	566	&	2733	&	1822	&	3338	&	1423	&	17815	&	3452	&	1294	&	2472	&	2254	&	2895	&	4263	\\
p\_osa-07	&	1047	&	24062	&	388	&	2716	&	1130	&	490	&	110	&	110	&	107	&	110	&	106	&	106	\\
p\_pds-02	&	1230	&	3900	&	837	&	557	&	1290	&	3194	&	947	&	660	&	638	&	936	&	534	&	543	\\
p\_perold	&	547	&	1321	&	1160	&	1519	&	2060	&	61786	&	5442	&	831	&	749	&	4068	&	671	&	573	\\
p\_pilot.ja	&	740	&	1726	&	1549	&	2034	&	2322	&	t	&	41609	&	1672	&	1607	&	16099	&	2895	&	3144	\\
p\_pilot4	&	352	&	946	&	554	&	682	&	631	&	33873	&	2328	&	456	&	709	&	1530	&	860	&	961	\\
p\_recipe	&	40	&	80	&	14	&	10	&	40	&	13	&	10	&	10	&	10	&	10	&	10	&	10	\\
p\_sc205	&	83	&	140	&	106	&	72	&	85	&	39	&	25	&	26	&	21	&	26	&	23	&	19	\\
p\_sc50b	&	18	&	30	&	21	&	13	&	19	&	1	&	1	&	1	&	1	&	1	&	1	&	1	\\
p\_scagr25	&	263	&	462	&	288	&	619	&	302	&	1267	&	306	&	196	&	248	&	296	&	271	&	206	\\
p\_scagr7	&	65	&	120	&	84	&	93	&	80	&	65	&	29	&	26	&	28	&	29	&	26	&	29	\\
p\_scfxm1	&	241	&	486	&	342	&	272	&	343	&	572	&	196	&	146	&	164	&	143	&	93	&	84	\\
p\_scfxm2	&	483	&	974	&	681	&	576	&	742	&	1715	&	313	&	275	&	999	&	352	&	186	&	157	\\
p\_scfxm3	&	725	&	1462	&	995	&	871	&	1066	&	t	&	491	&	343	&	532	&	474	&	254	&	226	\\
p\_scorpion	&	160	&	219	&	159	&	160	&	168	&	62	&	39	&	36	&	33	&	37	&	32	&	30	\\
p\_scrs8	&	177	&	919	&	328	&	333	&	254	&	1072	&	318	&	184	&	153	&	184	&	92	&	79	\\
p\_scsd1	&	77	&	760	&	107	&	217	&	86	&	817	&	197	&	264	&	199	&	264	&	267	&	t	\\
p\_scsd6	&	147	&	1350	&	324	&	404	&	163	&	t	&	398	&	256	&	319	&	256	&	129	&	117	\\
p\_sctap1	&	269	&	608	&	239	&	238	&	329	&	180	&	78	&	26	&	44	&	26	&	22	&	22	\\
p\_sctap2	&	977	&	2303	&	395	&	385	&	1118	&	1853	&	587	&	274	&	378	&	274	&	237	&	237	\\
p\_sctap3	&	1344	&	3111	&	518	&	418	&	1495	&	740	&	577	&	318	&	377	&	318	&	260	&	271	\\
p\_seba	&	2	&	9	&	1	&	8	&	8	&	11	&	6	&	6	&	6	&	6	&	7	&	7	\\
p\_share1b	&	93	&	228	&	117	&	138	&	143	&	433	&	192	&	105	&	59	&	89	&	55	&	44	\\
p\_share2b	&	93	&	159	&	110	&	90	&	143	&	158	&	48	&	27	&	24	&	26	&	25	&	24	\\
p\_shell	&	234	&	1157	&	273	&	397	&	259	&	408	&	127	&	129	&	137	&	128	&	111	&	110	\\
p\_ship04l	&	288	&	1901	&	403	&	423	&	308	&	303	&	76	&	71	&	92	&	71	&	70	&	70	\\
p\_ship04s	&	188	&	1253	&	295	&	269	&	192	&	254	&	75	&	72	&	70	&	72	&	65	&	64	\\
p\_ship08l	&	470	&	3121	&	656	&	789	&	474	&	3185	&	201	&	184	&	184	&	184	&	157	&	155	\\
p\_ship08s	&	234	&	1548	&	392	&	425	&	237	&	1943	&	115	&	94	&	105	&	96	&	86	&	85	\\
p\_ship12l	&	609	&	4170	&	877	&	1065	&	673	&	786	&	132	&	131	&	206	&	131	&	131	&	131	\\
p\_ship12s	&	267	&	1870	&	482	&	523	&	273	&	349	&	111	&	112	&	132	&	113	&	110	&	110	\\
p\_sierra	&	877	&	2294	&	415	&	352	&	986	&	2044	&	834	&	664	&	758	&	603	&	580	&	558	\\
p\_stair	&	258	&	407	&	446	&	291	&	340	&	725	&	237	&	126	&	191	&	418	&	92	&	82	\\
p\_standata	&	168	&	398	&	35	&	72	&	195	&	191	&	94	&	75	&	101	&	92	&	72	&	78	\\
p\_standgub	&	168	&	398	&	35	&	72	&	195	&	190	&	92	&	75	&	97	&	87	&	75	&	75	\\
p\_standmps	&	270	&	986	&	166	&	281	&	390	&	313	&	109	&	93	&	120	&	96	&	83	&	77	\\
p\_stocfor1	&	69	&	117	&	52	&	31	&	69	&	43	&	19	&	19	&	29	&	19	&	17	&	17	\\
p\_stocfor2	&	1448	&	2336	&	1166	&	915	&	1448	&	3153	&	639	&	604	&	1204	&	604	&	539	&	541	\\
p\_tuff	&	175	&	473	&	167	&	175	&	470	&	263	&	117	&	17	&	6	&	26	&	13	&	13	\\
p\_vtp.base	&	40	&	71	&	34	&	46	&	42	&	25	&	25	&	25	&	26	&	25	&	26	&	26	\\

    \bottomrule
    \caption{Pivot numbers on the NETLIB subset. "p\_" means the LP instance has been presolved. "t" means 300 seconds exceeded. COPT uses the dual simplex method while GUROBI uses the primal simplex method.}
    \label{tab:apx-netlib-iter} \\
\end{longtable}

\end{center}

\begin{center}
    \begin{longtable}[!ht]{lcc|ccccc|cc}
    \toprule
        Prob	&	$m$	&	$n$	&	Bland	&	Dantzig	&	SE	&	GI	&	LD	&	EXP	&	EXP-II \\
    \midrule
    \endfirsthead

    \toprule
        Prob	&	$m$	&	$n$	&	Bland	&	Dantzig	&	SE	&	GI	&	LD	&	EXP	&	EXP-II \\
    \midrule
    \endhead

    \endfoot


p\_25fv47	&	696	&	1746	&	t	&	21.786	&	6.014	&	90.052	&	15.575	&	3.639	&	16.853	\\
p\_adlittle	&	53	&	134	&	0.018	&	0.009	&	0.008	&	0.018	&	0.010	&	0.007	&	0.008	\\
p\_afiro	&	9	&	19	&	0.001	&	0.001	&	0.001	&	0.001	&	0.001	&	0.002	&	0.002	\\
p\_agg	&	149	&	234	&	0.046	&	0.020	&	0.020	&	0.030	&	0.021	&	0.020	&	0.025	\\
p\_agg2	&	277	&	471	&	0.176	&	0.097	&	0.117	&	0.293	&	0.119	&	0.115	&	0.265	\\
p\_agg3	&	277	&	471	&	0.160	&	0.105	&	0.109	&	0.317	&	0.118	&	0.112	&	0.221	\\
p\_bandm	&	178	&	333	&	0.549	&	0.095	&	0.085	&	0.294	&	0.142	&	0.068	&	0.158	\\
p\_beaconfd	&	13	&	35	&	0.002	&	0.001	&	0.002	&	0.002	&	0.002	&	0.002	&	0.002	\\
p\_blend	&	54	&	89	&	0.026	&	0.015	&	0.013	&	0.020	&	0.013	&	0.014	&	0.012	\\
p\_bnl1	&	457	&	1336	&	0.709	&	0.464	&	0.190	&	1.743	&	0.506	&	0.153	&	0.242	\\
p\_bnl2	&	992	&	2813	&	89.173	&	29.586	&	13.545	&	107.922	&	28.648	&	4.652	&	17.218	\\
p\_boeing1	&	290	&	655	&	1.206	&	0.387	&	0.177	&	0.805	&	0.406	&	0.205	&	0.522	\\
p\_boeing2	&	122	&	261	&	0.065	&	0.030	&	0.018	&	0.062	&	0.036	&	0.020	&	0.025	\\
p\_bore3d	&	49	&	79	&	0.005	&	0.003	&	0.003	&	0.007	&	0.004	&	0.005	&	0.005	\\
p\_brandy	&	107	&	207	&	0.340	&	0.040	&	0.040	&	0.146	&	0.065	&	0.031	&	0.053	\\
p\_capri	&	184	&	362	&	0.051	&	0.035	&	0.028	&	0.048	&	0.022	&	0.035	&	0.054	\\
p\_czprob	&	463	&	2446	&	21.855	&	1.446	&	1.461	&	25.602	&	2.108	&	0.795	&	3.640	\\
p\_degen2	&	380	&	692	&	2.210	&	0.835	&	0.378	&	1.739	&	3.504	&	0.443	&	1.168	\\
p\_degen3	&	1410	&	2510	&	t	&	t	&	18.909	&	170.144	&	t	&	10.910	&	53.267	\\
p\_e226	&	143	&	364	&	1.025	&	0.189	&	0.085	&	0.567	&	0.181	&	0.062	&	0.108	\\
p\_etamacro	&	316	&	609	&	0.631	&	0.217	&	0.199	&	0.561	&	0.301	&	0.171	&	0.360	\\
p\_fffff800	&	270	&	774	&	0.579	&	0.158	&	0.152	&	0.498	&	0.151	&	0.109	&	0.194	\\
p\_finnis	&	296	&	618	&	0.620	&	0.246	&	0.146	&	0.469	&	0.157	&	0.135	&	0.289	\\
p\_fit1d	&	24	&	1047	&	t	&	0.197	&	0.154	&	7.115	&	0.577	&	0.551	&	t	\\
p\_fit1p	&	627	&	1655	&	t	&	2.619	&	1.308	&	2.959	&	1.323	&	1.109	&	1.417	\\
p\_fit2d	&	25	&	10387	&	t	&	6.445	&	4.176	&	t	&	33.568	&	13.587	&	t	\\
p\_forplan	&	93	&	398	&	0.158	&	0.031	&	0.022	&	0.201	&	0.022	&	0.015	&	0.023	\\
p\_ganges	&	382	&	721	&	0.585	&	0.406	&	0.399	&	1.808	&	0.459	&	0.556	&	1.498	\\
p\_gfrd-pnc	&	169	&	549	&	0.339	&	0.071	&	0.072	&	0.278	&	0.070	&	0.117	&	0.344	\\
p\_greenbeb	&	1415	&	3609	&	t	&	69.536	&	49.962	&	t	&	47.861	&	18.032	&	t	\\
p\_grow15	&	300	&	645	&	2.892	&	0.081	&	0.099	&	0.322	&	0.093	&	0.141	&	0.493	\\
p\_grow22	&	440	&	946	&	4.634	&	0.251	&	0.198	&	0.852	&	0.174	&	0.220	&	1.655	\\
p\_grow7	&	140	&	301	&	0.103	&	0.021	&	0.021	&	0.035	&	0.022	&	0.030	&	0.075	\\
p\_israel	&	163	&	304	&	0.198	&	0.135	&	0.056	&	0.203	&	0.056	&	0.046	&	0.098	\\
p\_kb2	&	37	&	62	&	0.025	&	0.008	&	0.009	&	0.028	&	0.010	&	0.005	&	0.006	\\
p\_ken-07	&	836	&	2031	&	17.876	&	3.848	&	7.082	&	68.923	&	5.708	&	6.722	&	32.148	\\
p\_lotfi	&	117	&	329	&	0.113	&	0.045	&	0.029	&	0.146	&	0.029	&	0.027	&	0.049	\\
p\_nesm	&	566	&	2733	&	44.969	&	8.800	&	6.777	&	199.992	&	6.703	&	10.746	&	197.144	\\
p\_osa-07	&	1047	&	24062	&	22.419	&	5.107	&	5.363	&	9.777	&	5.379	&	5.089	&	6.025	\\
p\_pds-02	&	1230	&	3900	&	35.032	&	10.537	&	13.510	&	138.475	&	12.750	&	7.980	&	44.800	\\
p\_perold	&	547	&	1321	&	114.754	&	10.453	&	2.384	&	13.875	&	8.340	&	1.546	&	7.069	\\
p\_pilot.ja	&	740	&	1726	&	t	&	123.476	&	8.639	&	92.867	&	53.838	&	10.701	&	93.088	\\
p\_pilot4	&	352	&	946	&	25.051	&	2.248	&	0.614	&	7.780	&	1.621	&	1.044	&	8.731	\\
p\_recipe	&	40	&	80	&	0.003	&	0.002	&	0.002	&	0.003	&	0.002	&	0.003	&	0.003	\\
p\_sc205	&	83	&	140	&	0.015	&	0.010	&	0.011	&	0.015	&	0.009	&	0.009	&	0.011	\\
p\_sc50b	&	18	&	30	&	0.000	&	0.000	&	0.000	&	0.000	&	0.000	&	0.001	&	0.000	\\
p\_scagr25	&	263	&	462	&	0.911	&	0.211	&	0.163	&	0.501	&	0.215	&	0.234	&	0.381	\\
p\_scagr7	&	65	&	120	&	0.022	&	0.010	&	0.010	&	0.014	&	0.007	&	0.009	&	0.018	\\
p\_scfxm1	&	241	&	486	&	0.377	&	0.123	&	0.108	&	0.331	&	0.103	&	0.075	&	0.135	\\
p\_scfxm2	&	483	&	974	&	2.438	&	0.448	&	0.507	&	3.199	&	0.546	&	0.331	&	0.840	\\
p\_scfxm3	&	725	&	1462	&	t	&	1.452	&	1.311	&	7.905	&	1.519	&	0.872	&	2.974	\\
p\_scorpion	&	160	&	219	&	0.040	&	0.020	&	0.017	&	0.024	&	0.018	&	0.018	&	0.022	\\
p\_scrs8	&	177	&	919	&	0.571	&	0.173	&	0.124	&	0.934	&	0.124	&	0.063	&	0.097	\\
p\_scsd1	&	77	&	760	&	0.236	&	0.058	&	0.096	&	0.536	&	0.093	&	0.107	&	t	\\
p\_scsd6	&	147	&	1350	&	t	&	0.286	&	0.167	&	1.391	&	0.170	&	0.085	&	0.159	\\
p\_sctap1	&	269	&	608	&	0.181	&	0.057	&	0.022	&	0.101	&	0.022	&	0.020	&	0.025	\\
p\_sctap2	&	977	&	2303	&	9.608	&	3.069	&	1.889	&	13.572	&	1.879	&	1.466	&	4.236	\\
p\_sctap3	&	1344	&	3111	&	7.808	&	6.147	&	4.558	&	27.905	&	4.541	&	3.150	&	9.368	\\
p\_seba	&	2	&	9	&	0.002	&	0.001	&	0.001	&	0.001	&	0.001	&	0.001	&	0.002	\\
p\_share1b	&	93	&	228	&	0.135	&	0.055	&	0.035	&	0.060	&	0.029	&	0.023	&	0.031	\\
p\_share2b	&	93	&	159	&	0.065	&	0.018	&	0.009	&	0.015	&	0.008	&	0.010	&	0.013	\\
p\_shell	&	234	&	1157	&	0.303	&	0.087	&	0.109	&	0.926	&	0.095	&	0.093	&	0.188	\\
p\_ship04l	&	288	&	1901	&	0.291	&	0.076	&	0.078	&	0.407	&	0.078	&	0.082	&	0.170	\\
p\_ship04s	&	188	&	1253	&	0.147	&	0.041	&	0.046	&	0.155	&	0.045	&	0.044	&	0.068	\\
p\_ship08l	&	470	&	3121	&	6.559	&	0.414	&	0.591	&	2.721	&	0.538	&	0.398	&	0.932	\\
p\_ship08s	&	234	&	1548	&	1.453	&	0.082	&	0.077	&	0.392	&	0.079	&	0.073	&	0.123	\\
p\_ship12l	&	609	&	4170	&	3.514	&	0.611	&	0.718	&	3.590	&	0.782	&	0.690	&	1.149	\\
p\_ship12s	&	267	&	1870	&	0.305	&	0.073	&	0.089	&	0.371	&	0.090	&	0.092	&	0.185	\\
p\_sierra	&	877	&	2294	&	9.001	&	3.705	&	5.369	&	49.276	&	3.167	&	3.701	&	20.187	\\
p\_stair	&	258	&	407	&	0.488	&	0.164	&	0.101	&	0.531	&	0.298	&	0.078	&	0.164	\\
p\_standata	&	168	&	398	&	0.104	&	0.045	&	0.041	&	0.124	&	0.045	&	0.045	&	0.089	\\
p\_standgub	&	168	&	398	&	0.104	&	0.044	&	0.041	&	0.118	&	0.043	&	0.046	&	0.097	\\
p\_standmps	&	270	&	986	&	0.248	&	0.088	&	0.093	&	0.514	&	0.082	&	0.083	&	0.150	\\
p\_stocfor1	&	69	&	117	&	0.010	&	0.005	&	0.005	&	0.015	&	0.005	&	0.006	&	0.007	\\
p\_stocfor2	&	1448	&	2336	&	29.449	&	6.350	&	7.660	&	113.607	&	7.475	&	6.221	&	21.390	\\
p\_tuff	&	175	&	473	&	0.133	&	0.058	&	0.010	&	0.009	&	0.015	&	0.009	&	0.011	\\
p\_vtp.base	&	40	&	71	&	0.004	&	0.004	&	0.004	&	0.011	&	0.003	&	0.006	&	0.008	\\

        \bottomrule
        \caption{Solving time (in seconds) on the NETLIB subset. "p\_" means the LP instance has been presolved. "t" means 300 seconds exceeded.}
    \label{tab:apx-netlib-time} \\
    \end{longtable}
\end{center}

\end{document}